

\documentclass[11]{siamltex}

\usepackage[pagewise]{lineno} 

\newcommand{\nwc}{\newcommand}
\pagestyle{plain}

\usepackage[pdftex]{graphicx}
\usepackage{psfrag}
\usepackage{wrapfig}
\usepackage{epsf}
\usepackage{amsmath,amssymb}
\usepackage[font={footnotesize}]{caption} 
\usepackage{url}
\usepackage[mathscr]{euscript}
\usepackage{xcolor}
\usepackage{subcaption}
\usepackage{tikz}
\usetikzlibrary{patterns}
\usepackage{pgfplots}
\tikzset{every picture/.style={line width=0.75pt}} 

\newtheorem{thm}{Theorem}[section]

\newtheorem{rema}[thm]{Remark}
\newtheorem{result}[thm]{Main Result}

%
%
\newcommand{\barint}{\hbox{$\int$\kern-0.75\intwidth
\vrule width 0.5\intwidth height 2.4pt depth -2pt\kern0.25\intwidth}}
\newlength\intwidth
\setbox0=\hbox{$\int$}
\intwidth=\wd0

\newcommand\avint{\hbox{\hbox{$\displaystyle \int$}\hbox{\kern-.9em{$-$}}}}

\newcommand\smavint{\hbox{\hbox{$\int$}\hbox{\kern-.75em{$-$}}}}

\nwc{\st}{^{\mbox{\it st}}}

\nwc{\qref}[1]{(\ref{#1})}
\nwc{\veloc}{v}
\nwc{\rhoc}{\beta}
\nwc{\hl}{\hat{L}}

\def\Xint#1{\mathchoice
{\XXint\displaystyle\textstyle{#1}}%
{\XXint\textstyle\scriptstyle{#1}}%
{\XXint\scriptstyle\scriptscriptstyle{#1}}%
{\XXint\scriptscriptstyle\scriptscriptstyle{#1}}%
\!\int}
\def\XXint#1#2#3{{\setbox0=\hbox{$#1{#2#3}{\int}$}
\vcenter{\hbox{$#2#3$}}\kern-.51\wd0}}

\def\dashint{\Xint-}

\nwc{\intRp}{\int_0^\infty}
\nwc{\aint}{\dashint}
\nwc{\aaint}{\dashint}

%
%

\newcommand{\cE}{{\cal E}}

\newcommand{\R}{\mathbb R}

\newcommand{\be}{\begin{eqnarray}}
\newcommand{\ee}{\end{eqnarray}}

\newcommand{\ben}{\begin{eqnarray*}}
\newcommand{\een}{\end{eqnarray*}}

\newcommand{\x}[2]{\|\, #1 \,\|_{#2}}

\newcommand\restr[2]{{
		\left.\kern-\nulldelimiterspace 
		#1 
		\littletaller 
		\right|_{#2} 
}}

\newcommand{\littletaller}{\mathchoice{\vphantom{\big|}}{}{}{}}

\title{}
\author{}

\numberwithin{equation}{section}
\usepackage{indentfirst}
\usepackage{stmaryrd}

\begin{document}
	
\title{A Diffuse Domain Approximation with \\ Transmission-Type Boundary Conditions I: \\ Asymptotic Analysis and Numerics}
\author{
	Toai Luong
	\thanks{Corresponding author, Department of Mathematics, The University of Tennessee, Knoxville, TN, USA. 
		Email: tluong4@utk.edu.
	} \and 
	Tadele Mengesha
	\thanks{Department of Mathematics, The University of Tennessee, Knoxville, TN, USA. 
		Email: mengesha@utk.edu.
	} \and 
	Steven M. Wise
	\thanks{Department of Mathematics, The University of Tennessee, Knoxville, TN, USA. 
	Email: swise1@utk.edu
	} \and 
	Ming Hei Wong
	\thanks{Department of Mathematics, The University of Tennessee, Knoxville, TN, USA. 
		Email: mwong4@vols.utk.edu.
	}
}

\maketitle

    \begin{abstract}
Diffuse domain methods (DDMs) have garnered significant attention for approximating solutions to partial differential equations on complex geometries. These methods implicitly represent the geometry by replacing the sharp boundary interface with a diffuse layer of thickness $\varepsilon$, which scales with the minimum grid size. This approach reformulates the original equations on an extended regular domain, incorporating boundary conditions through singular source terms. In this work, we conduct a matched asymptotic analysis of a DDM for a two-sided problem with transmission-type Robin boundary conditions. Our results show that, in the one dimensional space, the solution of the diffuse domain approximation asymptotically converges to the solution of the original problem, with exactly first-order accuracy in $\varepsilon$. Furthermore, we provide numerical simulations that validate and illustrate the analytical result.  
    \end{abstract}

\begin{keywords}
	partial differential equations, phase-field approximation, diffuse domain method, diffuse interface approximation, asymptotic analysis, numerical simulation, reaction-diffusion equation, transmission boundary conditions.
\end{keywords}

    \section{Introduction}

Partial differential equations (PDEs) are foundational tools for modeling diverse phenomena across physical, biological, and engineering sciences, including fluid flow, material behavior, tissue dynamics, and phase transitions. In many practical scenarios, these problems arise within domains that are complex, irregular, or time-dependent, such as evolving interfaces in phase transitions or intricate geometries in biological systems. Traditional PDE solution methods often require domains with simple, specific geometric boundaries, posing challenges in mesh generation and driving up computational costs. To address these limitations, diffuse domain methods (DDMs) have emerged as versatile approaches for solving PDEs on irregular or dynamically evolving domains.

The fundamental principles of DDMs involve (i) embedding the original complex domain into a larger, simpler computational domain, like a square or a cube, (ii) creating a simple, structured mesh for the larger domain that resolves the shape of the complex domain without fitting it precisely, and (iii) solving an approximate PDE problem on the larger computational domain. This approach eliminates the need for intricate, boundary-conforming meshes that conventional methods typically require.  This is especially important in the time-dependent setting where the shape of the complex domain is constantly changing, requiring expensive re-meshing. A smooth phase-field function is employed to approximate the characteristic function of the complex domain, while a parameter $\varepsilon$, typically related to the grid size, defines the width of the diffuse interfacial region, influencing the accuracy of the approximation. The original PDE is reformulated with additional source terms to enforce boundary conditions. For small values of $\varepsilon$, DDMs are especially efficient when paired with adaptive mesh refinement, which allows for fine grid cells in the narrow transition layer and coarser cells in the extended, non-physical regions of the domain. DDMs offer the advantage of flexible application to a broad range of equations and can be solved using standard discretization methods, both uniform and adaptive, along with fast iterative solvers based, for example, on geometric multigrid methods.

The concept of the DDM was first introduced by Kockelkoren and Levine \cite{Kockelkoren-DDM2003} for studying diffusion within a cell with zero Neumann boundary conditions at the cell boundary. 
A related approach, known as the fictitious domain method, was earlier employed by Glowinski et al. \cite{GlowinskiFDD-1992, GlowinskiFDD-1994} to compute numerical solutions to Dirichlet problems for a class of elliptic operators. 
Since then, DDMs have been subsequently applied to model electrical waves in the heart \cite{Fenton-DDM2005} and membrane-bound Turing patterns \cite{Levine-DDM2005}.  More recent developments have expanded DDMs to solve PDEs on both stationary \cite{Voigt-DDM2006} and evolving surfaces (e.g., \cite{Demlow-DDM2007, Dziuk-DDM2008, Dziuk-DDM2010, Dziuk-DDM2012, Elliott-DDM2009, Elliott-DDM2011}). The analysis of DDM approach to solving elliptic PDEs in domains with complex boundaries subject to  Dirichlet, Neumann, and Robin boundary conditions is provided in \cite{MR3648102,MR3540309,MR3032706,Li-DDM2006,MR3541946}.
DDMs have gained wide use in applications such as phase-field modeling, where they support simulations of complex phenomena in fields like biology (e.g., \cite{Camley-DDM2013, Kockelkoren-DDM2003, Fenton-DDM2005, Lowengrub-DDM2014, Ratz-DDM2015, Voigt-DDM2011-bio}), 
fluid dynamics (e.g., \cite{Voigt-DDM2010, Voigt-DDM2011-fluid, Voigt-DDM2012, Voigt-DDM2014,MR4642032,Voigt-DDM2009}), and materials science (e.g., \cite{Thornton-DDM2016, Ratz-DDM2015, Ratz-DDM2016, Thornton-DDM2018}).

    \subsection{The One-sided Diffuse Domain Problem}

Let $\Omega_1$ be a bounded open subset of $\R^n$. We consider the reaction-diffusion equation in $\Omega_1$: 
    \begin{align}
    \label{intro-eqn}
-\Delta u + u & = f,  \quad \text{in } \Omega_1,  
    \\
    \label{intro-bcs}
- \nabla u \cdot \boldsymbol{n}_1 & = \kappa u + g, \quad \text{on } \partial\Omega_1,
    \end{align}
where $\boldsymbol{n}_1$ denotes the outward-pointing  unit normal  vector on $\partial\Omega_1$. 
Here, $\kappa \geq 0$ is a given constant. 
Observe that Neumann boundary conditions hold when $\kappa = 0$, and Robin boundary conditions hold when $\kappa > 0$.

To approximate this problem using a diffuse domain approach, we define an extended domain $\Omega$, a larger cuboidal region containing $\overline{\Omega_1}$ (see Figure~\ref{domain}). 
In this extended domain $\Omega$, the diffuse domain approximation equation is 
    \begin{align}\label{intro-diff-eqn}
-\nabla \cdot (\phi_\varepsilon \nabla u_\varepsilon) + \phi_\varepsilon u_\varepsilon + \text{BC} = \phi_\varepsilon f, 
    \end{align}
where $\phi_\varepsilon(x)$ approximates the characteristic function $\chi_{\Omega_1}(x)$ of $\Omega_1$, given by 
\begin{align*}
	\chi_{\Omega_1}(x) = \begin{cases}
		1,  &\text{if } x \in \Omega_1, \\
		0,  &\text{if } x \notin \Omega_1.
	\end{cases}
\end{align*}

\begin{figure}[htb!]
    \centering
    \begin{tikzpicture}[x=0.75pt,y=0.75pt,yscale=-1,xscale=1]

\draw  [line width=2.25]  (272,46) -- (423.67,46) -- (423.67,198.17) -- (272,198.17) -- cycle ;
\draw  [fill={rgb, 255:red, 184; green, 178; blue, 178 }  ,fill opacity=1 ] (380,107.33) .. controls (417,84.33) and (403,159.33) .. (348,163.33) .. controls (293,167.33) and (284,82.33) .. (314,96.33) .. controls (344,110.33) and (343,130.33) .. (380,107.33) -- cycle ;
\draw [line width=0.75]    (325.67,103) -- (338.16,81.59) ;
\draw [shift={(339.67,79)}, rotate = 120.26] [fill={rgb, 255:red, 0; green, 0; blue, 0 }  ][line width=0.08]  [draw opacity=0] (6.25,-3) -- (0,0) -- (6.25,3) -- cycle    ;
\draw [line width=0.75]    (424.17,120) -- (448.33,120.22) ;
\draw [shift={(451.33,120.25)}, rotate = 180.53] [fill={rgb, 255:red, 0; green, 0; blue, 0 }  ][line width=0.08]  [draw opacity=0] (6.25,-3) -- (0,0) -- (6.25,3) -- cycle    ;

\draw (337,130) node [anchor=north west][inner sep=0.75pt]    {$\Omega _{1}$};
\draw (290,165) node [anchor=north west][inner sep=0.75pt]    {$\Omega _{2}$};
\draw (342.33,68) node [anchor=north west][inner sep=0.75pt]    {$\boldsymbol{n}_{1}$};
\draw (454.83,115) node [anchor=north west][inner sep=0.75pt]    {$\boldsymbol{n}_{2}$};

\end{tikzpicture}
    \caption{A domain $\Omega_1$ is covered by a larger cuboidal domain $\Omega$, with $\Omega_2 := \Omega \setminus \overline{\Omega_1}$.}
	\label{domain}
\end{figure}

A common approximation for $\chi_{\Omega_1}$ is the phase-field function
\begin{align*}
	\phi_\varepsilon(x) := \frac{1}{2}\left[ 1 + \tanh \left( \frac{r(x)}{\varepsilon}\right) \right] \approx \chi_{\Omega_1}(x),
\end{align*} 
where $\varepsilon > 0$ is small, defining the interface thickness. 
Here, $r(x)$ is the signed distance function from $x \in \R^n$ to $\partial\Omega_1$, 
which is assumed to be positive within $\Omega_1$ and negative outside $\overline{\Omega_1}$. 
There are different choices for the boundary term BC in \qref{intro-diff-eqn} \cite{Li-DDM2009, Lowengrub-DDM2015}. 
For instance, we may choose either 
    \begin{align}
    \label{BC1}
\text{BC} = \text{BC1} = (\kappa u_\varepsilon + g)|\nabla\phi_\varepsilon|,
    \end{align}
or 
    \begin{align}
    \label{BC2}
\text{BC} = \text{BC2} = (\kappa u_\varepsilon + g)\varepsilon|\nabla\phi_\varepsilon|^2,
    \end{align}
where $|\nabla\phi_\varepsilon(x)|$ and $\varepsilon|\nabla\phi_\varepsilon(x)|^2$ approximate the surface delta function $\delta_{\partial\Omega_1}$ of $\partial\Omega_1$.

Using asymptotic analysis for the Neumann boundary condition case ($\kappa = 0$), Lervåg and Lowengrub~\cite{Lowengrub-DDM2015} argued that Equation~\qref{intro-diff-eqn} with either BC1 or BC2 is second-order accurate in $\varepsilon$. Specifically, in the expansion of the solution,
    \begin{align*}
u_\varepsilon(x) = u_0(x) +\varepsilon u_1(x) + \varepsilon^2 u_2(x) + \cdots ,
    \end{align*}
they argued that $u_1\equiv 0$, and $u_0$ solves \eqref{intro-eqn}--\eqref{intro-bcs}, so that
    \begin{align*}
u_\varepsilon(x) - u_0(x) = O(\varepsilon^2).
    \end{align*}
However, their asymptotic analysis was limited to the interior domain $\Omega_1$. Our objective is to extend their matched asymptotic analysis to encompass both the interior domain $\Omega_1$ and the exterior domain $\Omega_2 := \Omega \setminus \overline{\Omega_1}$. 

In the exterior domain $\Omega_2$, the function $\phi_\varepsilon$ decays exponentially, which introduces significant challenges for both theoretical and numerical analyses. Specifically, the stiffness matrices in the numerical methods would have diagonal entries that are exponentially small, making numerical approximation impractical.  To avoid ill-conditioning in practical numerical simulations, Lerv{\aa}g and Lowengrub~\cite{Lowengrub-DDM2015} introduced a modified phase-field approximation for the highest-order term, replacing $-\nabla \cdot(\phi_\varepsilon\nabla u_\varepsilon)$ with
    \begin{align*}
-\nabla \cdot(D_\varepsilon\nabla u_\varepsilon) , \quad D_\varepsilon(x) := \alpha + (1 - \alpha)\phi_\varepsilon(x),
    \end{align*}
where $\alpha$ is a very small fixed positive constant (specifically, $\alpha = 10^{-6}$), converting the problem into what is effectively a two-sided formulation. Note that, with this change, some further information about what problem $u_\varepsilon$ should satisfy in $\Omega_2$ is needed. For one thing, boundary conditions are required on the outer boundary $\partial\Omega$. We emphasize that, while they used this two-sided formulation in their numerical simulations, their asymptotic analysis covers only the one-sided problem, thus leaving a gap between the analysis and the practical implementation.  Our ultimate goal is to perform asymptotic analyses on the two-sided approximation with respect to smallness of both $\varepsilon$ and the stabilizing parameter $\alpha$, leading to a more complete understanding of the approximation of the one-sided problem. In this paper, however, we only take a preliminary step by performing asymptotic analyses and numerical simulations for the two-sided problem, for the fixed finite $\alpha$ case, though in a slightly more generalized form, described in Section~\ref{sec:two-sided}. See also the discussion section, Section~\ref{sec:discuss}, at the end of the paper.

\subsection{The Two-sided Diffuse Domain Problem}\label{sec:two-sided}
Let $\Omega_1$ be a bounded open subset of $\R^n$ with a sufficiently smooth boundary $\partial\Omega_1$. 
Let $\Omega$ be an open cuboidal domain such that $\Omega \supset \overline{\Omega_1}$ and $\partial\Omega \cap \partial\Omega_1 = \varnothing$. 
Define $\Omega_2 := \Omega \setminus \overline{\Omega_1}$. 
We consider the following two-sided boundary value problem in $\Omega$: 
Find a pair of functions, $u_1:\Omega_1 \to \R$ and $u_2:\Omega_2 \to \R$, that satisfy
    \begin{align}
    \label{bvp1} 
-\Delta u_1 + \gamma u_1 &= q, \quad \text{in } \Omega_1,  
    \\
	\label{bvp2} 
-\alpha\Delta u_2 + \beta u_2 &= h, \quad \text{in } \Omega_2, 
    \\
	\label{bvp3} 
 u_1 &= u_2, \quad \text{on } \partial\Omega_1,  
    \\
	\label{bvp4} 
-\boldsymbol{n}_1\cdot \nabla (u_1 - \alpha u_2)  & = \kappa u_1 + g, \quad \text{on } \partial\Omega_1, 
    \\
	\label{bvp5} 
 \alpha \, \boldsymbol{n}_2 \cdot \nabla u_2  & = 0, \quad \text{on } \partial\Omega.
    \end{align}
The boundary conditions across the interface $\partial\Omega_1$ are called transmission-type boundary conditions. These guarantee that the function values are continuous across the interface, and the fluxes have a jump discontinuity owing to some physical phenomenon. For example, in electrostatics problems, excess charge on the interface can result in such jump conditions for the flux.

Here, we assume the following:
    \begin{enumerate}
	\item[(1)] 
$h, q \in L^2(\Omega)$ and $g \in H^1(\Omega)$ are given functions;
	
	\item[(2)] 
$\alpha, \beta, \gamma$ are given positive constants, and $\kappa$ is a given nonnegative constant;
	
	\item[(3)]  
$\boldsymbol{n}_1$ denotes the outward-pointing  unit normal  vector on $\partial\Omega_1$, and $\boldsymbol{n}_2$ denotes the outward-pointing  unit normal vector on $\partial\Omega$ (see Figure~\ref{domain}).

    \end{enumerate}

A solution to the two-sided problem \qref{bvp1}--\qref{bvp5} is understood in the weak sense. 
If $(u_1, u_2) \in H^1(\Omega_1) \times H^1(\Omega_2)$ is a solution pair to the two-sided problem \qref{bvp1}--\qref{bvp5}, then $u_0: \Omega \to \R$, defined as
\begin{align}\label{defn-u0}
	u_0(x) := \begin{cases}
		u_1(x),  &\text{if } x \in \Omega_1, \\
		u_2(x),  &\text{if } x \in \Omega_2,
	\end{cases}
\end{align}
belongs to  $H^1(\Omega)$ and solves the following weak formulation:
    \begin{align} \label{weak-form}
\int_\Omega (D_0 \nabla u_0 \cdot \nabla w + c_0 u_0 w - f_0 w) \; dx + \int_{\partial\Omega_1}(\kappa u_0 + g)w\; dS = 0,
    \end{align}
for any $w \in H^1(\Omega)$,
where
\begin{align*}
	D_0(x) &:= \chi_{\Omega_1}(x) + \alpha\chi_{\Omega_2}(x), \\
	c_0(x) &:= \gamma\chi_{\Omega_1}(x) + \beta\chi_{\Omega_2}(x), \\
	f_0(x) &:= q(x)\chi_{\Omega_1}(x) + h(x)\chi_{\Omega_2}(x).	
\end{align*} 
Equivalently, $u_0$ minimizes the energy functional, $\cE_0$, given by
\begin{align*}
	\cE_0[u] = \int_{\Omega} \left[ \frac{1}{2}(D_0 |\nabla u|^2 + c_0 u^2) - f_0 u \right]\; dx + \int_{\partial\Omega_1} \left( \frac{1}{2}\kappa u^2 + gu \right) \; dS, \quad u \in H^1(\Omega), 
\end{align*}
Conversely, if $u_0$ minimizes $\cE_0$ over $H^1(\Omega)$, then $u_1: = \restr{u_0}{\Omega_1}$ and $u_2: = \restr{u_0}{\Omega_2}$ solve the two-sided problem \qref{bvp1}--\qref{bvp5}. 

Since $\cE_0$ is coercive and strictly convex, it has a unique minimizer $u_0 \in H^1(\Omega)$, which implies that the two-sided problem \qref{bvp1}--\qref{bvp5} has a unique solution, which is identified by $u_0$ via \eqref{defn-u0}.

For each $\varepsilon \in (0,1)$, using the boundary term BC1 defined by \qref{BC1}, the diffuse domain approximation of the problem \qref{bvp1}--\qref{bvp5} is then given by: 
Find a function $u_\varepsilon : \Omega \to \R$ that satisfies
    \begin{align}
	\label{diff-bvp1} 
-\nabla \cdot (D_\varepsilon \nabla u_\varepsilon) + c_\varepsilon u_\varepsilon + (\kappa u_\varepsilon + g)|\nabla\phi_\varepsilon| &= f_\varepsilon, \quad \text{in } \Omega,  
    \\
	\label{diff-bvp2} 
D_\varepsilon \nabla u_\varepsilon \cdot \boldsymbol{n}_2 & = 0, \quad \text{on } \partial\Omega,
    \end{align}
where
    \begin{align*}
D_\varepsilon(x) &:= \alpha + (1 - \alpha)\phi_\varepsilon(x) \approx D_0(x), 
    \\
c_\varepsilon(x) &:= \beta + (\gamma - \beta)\phi_\varepsilon(x) \approx c_0(x), 
    \\
f_\varepsilon(x) &:= h(x) + [q(x) - h(x)]\phi_\varepsilon(x) \approx f_0(x).	
    \end{align*}
For each $\varepsilon$, a solution $u_\varepsilon$ of the problem \qref{diff-bvp1}--\qref{diff-bvp2} minimizes the associated energy functional, $\cE_\varepsilon$, defined by
    \begin{align*}
\cE_\varepsilon [u] = \int_\Omega \left[ \frac{1}{2} (D_\varepsilon |\nabla u|^2 + c_\varepsilon u^2) - f_\varepsilon u +  \left(\frac{1}{2} \kappa u^2 + gu \right) |\nabla\phi_\varepsilon| \right]\; dx, \quad u \in H^1(\Omega). 
    \end{align*}
Since $\cE_\varepsilon$ is coercive and strictly convex, it has a unique minimizer, which implies that the diffuse domain problem \qref{diff-bvp1}--\qref{diff-bvp2} has a unique solution $u_\varepsilon \in H^1(\Omega)$.

\subsection{A Singular $\alpha \to 0$ Limit of the Two-Sided Problem}

One may ask \emph{How is the one-sided problem related to the two-sided version?} It is not a simple matter of setting $\alpha = 0$. Indeed, the limit $\alpha\to 0$ in the two-sided problem is, in general, singular. However, there is a setting for which the singular limit makes sense and yields a meaningful solution. 

Suppose that $\beta =0$ and $h\equiv 0$. Then we claim that the following decoupled problem emerges in the $\alpha\to 0$ limit: Find a pair of functions, $u_1:\Omega_1 \to \R$ and $u_2:\Omega_2 \to \R$, that satisfy
    \begin{align}
    \label{bvp1-sing} 
-\Delta u_1 + \gamma u_1 &= q, \quad \text{in } \Omega_1,  
    \\
	\label{bvp2-sing} 
-\Delta u_2  &= 0, \quad \text{in } \Omega_2, 
    \\
	\label{bvp3-sing} 
 u_1 &= u_2, \quad \text{on } \partial\Omega_1,  
    \\
	\label{bvp4-sing} 
- \boldsymbol{n}_1\cdot  \nabla u_1  & = \kappa u_1 + g, \quad \text{on } \partial\Omega_1, 
    \\
	\label{bvp5-sing} 
\boldsymbol{n}_2 \cdot \nabla u_2  & = 0, \quad \text{on } \partial\Omega.
    \end{align}
 This is just the one-sided problem coupled to a Laplace-type problem on an annular domain. The interior problem involving $u_1$ can be solved first. Subsequently, $u_2$ can be obtained by solving a Laplace's equation, with Dirichlet boundary conditions on $\partial\Omega_1$, namely, $u_2 = u_1$, and homogeneous Neumann conditions on $\partial\Omega_2$. This realization will lead us to a new diffuse-domain approximation method for the one-sided problem, as we explain in Section~\ref{sec:discuss}. The rigorous justification of the claim above is reserved for a future paper. For now, we will focus on the two-sided problem.

\section{Main Result}\label{sec:main-result-1}
In this paper, we study the asymptotic convergence of the diffuse domain approximation problem \eqref{diff-bvp1}--\eqref{diff-bvp2}  in the one dimensional space. 
Let $u_0$ be the solution of the two-sided problem \qref{bvp1}--\qref{bvp5} defined by \qref{defn-u0}, 
and let $u_\varepsilon$ be the solution to the diffuse domain approximation problem \qref{diff-bvp1}--\qref{diff-bvp2}, for each $\varepsilon \in (0,1)$. 
Assuming that the functions $h(x)$ and $q(x)$ are analytic, 
and the function $g(x)$ remains constant in the normal direction to the boundary $\partial\Omega_1$ of $\Omega_1$ within a narrow strip around $\partial\Omega_1$, 
we perform a matched asymptotic analysis for the problem \qref{diff-bvp1}--\qref{diff-bvp2} in 1D, leading to the main result presented below. 
Additionally, we provide numerical simulations and discuss their outcomes in relation to our analytical result.

    \begin{result}[Asymptotic convergence of $u_\varepsilon$]
    \label{thm-main-1}
In one dimension, $u_\varepsilon$ converges asymptotically and uniformly to $u_0$, as $\varepsilon \to 0$. Moreover, the diffuse domain approximation is precisely first-order accurate in $\varepsilon$, as expressed in \eqref{eqn:basic-approx}. 
    \end{result}

    \begin{rema}
We conjecture that the main result can be generalized to higher-dimensional spaces, provided the interface is sufficiently smooth. However, the analysis will become more intricate, as it involves the interface curvature and the Laplace-Beltrami operator. 
    \end{rema}

\begin{rema}
    The asymptotic analysis in Section~\ref{sec:proof-thm-1} considers the case where $g(x)$ is constant in the normal direction to $\partial\Omega_1$ within a narrow strip around $\partial\Omega_1$. In one dimension, this simply means that $g(x)$ is constant in some neighborhood of $\partial\Omega_1$, as we shall see.    Nonetheless, we expect that the asymptotic analysis can be adjusted to accommodate the case where $g(x)$ varies in the normal direction within this strip. Additionally, our numerical experiments for this more general setting suggest that the convergence rate remains first-order.
\end{rema}

\section{Asymptotic Analysis for the Diffuse Domain Problem in 1D}\label{sec:proof-thm-1}
In this section, we demonstrate a matched asymptotic analysis for the diffuse domain approximation problem \qref{diff-bvp1}--\qref{diff-bvp2} in 1D, 
which establishes the main result in Section~\ref{sec:main-result-1}. 
Without loss of generality, we can simplify the structure of the domains. In particular, we consider the domains as follows:
\begin{align*}
    \Omega = (-1,1), \quad \Omega_1 = \Omega_R = (0,1), \quad \Omega_2 = \Omega_L = (-1,0).
\end{align*}
The phase-field function is then defined as
\begin{align*}
	\phi_\varepsilon (x) = \frac{1}{2}\left[ 1 + \tanh \left(\frac{x}{\varepsilon}\right)\right] \approx \chi_{\Omega_R}(x),
\end{align*}
for each $\varepsilon \in (0,1)$ (see Figure~\ref{domain-1D}).

\begin{figure}[htb!]
	\centering
	\begin{tikzpicture}
    \begin{axis}[axis equal image,
        axis lines=middle,
        xlabel={$x$},
        ylabel={$y$},
        xmin=-1.2, xmax=1.2,
        ymin=-0.2, ymax=1.2,
        xtick={-1,0,1},
        ytick={0,1},
        legend pos=south east,
        domain=-1:1,
        samples=200,
        smooth
    ]
        \addplot[thick,red] {0.5 * (1 + tanh(x / 0.1))};
        \node at (axis cs:-0.5,-0.05) [below] {$\Omega_L$};
        \node at (axis cs:0.5,-0.05) [below] {$\Omega_R$};
        \node at (axis cs:0.5,1) [above] {$\phi_\varepsilon(x)$};
    \end{axis}
\end{tikzpicture}
	\caption{The domains $\Omega = (-1,1)$, $\Omega_L = (-1,0)$, $\Omega_R = (0,1)$, and the graph of $\phi_\varepsilon (x)$ in 1D.}
	\label{domain-1D}
\end{figure}

We assume that the functions $h(x)$ and $q(x)$ are analytic.
For convenience, we further assume that $g(x) \equiv \lambda$ in a small interval $(-\zeta, \zeta)$ around $x = 0$, where $\lambda$ is a constant.
Let $u_R:\Omega_R \to \R$ and $u_L:\Omega_L \to \R$ be the pair of functions that satisfy the following two-sided problem in 1D:
    \begin{align}
	\label{1D-bvp1} 
-u_R'' + \gamma u_R &= q, \quad \text{in } \Omega_R, 
    \\
	\label{1D-bvp2} 
-\alpha u_L'' + \beta u_L &= h, \quad \text{in } \Omega_L, 
    \\
	\label{1D-bvp3} 
u_R (0) &= u_L (0), 
    \\
	\label{1D-bvp4} 
u_R' (0) - \alpha u_L' (0) &= \kappa u_R (0) + \lambda, 
    \\
	\label{1D-bvp5} 
u_L'(-1) &= 0 = u_R'(1).
    \end{align}

We define the solution $u_0$ to \qref{1D-bvp1}--\qref{1D-bvp5} over $\Omega$ as
\begin{align*}
	u_0(x) := \begin{cases}
		u_R(x),  &\text{if } x \in \Omega_R, \\
		u_L(x),  &\text{if } x \in \Omega_L.
	\end{cases}
\end{align*}
Let $u_\varepsilon$ be the solution to the corresponding diffuse domain approximation problem
\begin{align}
	\label{1D-diff-bvp1} -(D_\varepsilon u'_\varepsilon)' + c_\varepsilon u_\varepsilon + (\kappa u_\varepsilon + \lambda) \phi_\varepsilon' &= f_\varepsilon, \quad \text{in } \Omega, \\
	\label{1D-diff-bvp2} u'_\varepsilon(-1) &= 0 = u'_\varepsilon(1).
\end{align}	
Assuming that $u_0$ and $u_\varepsilon$ are globally continuous across $\Omega$, 
we will show that $u_\varepsilon$ asymptotically converges to $u_0$, using the method of matched asymptotic expansions.

To demonstrate asymptotic analysis, we examine the expansions of the diffuse domain solution $u_\varepsilon$ as series in powers of the interface thickness, $\varepsilon$, in two distinct regions: near $x=0$ (inner expansion) and far from $x=0$ (outer expansion). 
These inner and outer expansions are then matched in overlapping regions where both are valid (see Figure~\ref{domain-1D-regions}), 
providing boundary conditions for the outer variables.

\begin{figure}[htb!]
    \centering
  \begin{tikzpicture}[scale=1]

\draw[->] (-4,0) -- (4,0) node[right] {$x$};

\foreach \x in {-4,0,4} {
  \pgfmathtruncatemacro{\ticklabel}{\x/4}
  \draw (\x,0.05) -- (\x,-0.05) node[below] {$\ticklabel$};
}

\fill[lightgray] (-4,0) rectangle (-2,2);
\draw (-3,1) node {\parbox{3cm}{\centering $\text{Left}$ \\ $\text{outer region}$}};

\fill[pattern=north east lines] (-2,0) rectangle (-1,2);

\fill[gray] (-1,0) rectangle (1,2);
\draw (0,1) node {\parbox{3cm}{\centering $\text{Inner}$ \\ $\text{region}$}};

\fill[pattern=north east lines] (1,0) rectangle (2,2);

\fill[lightgray] (2,0) rectangle (4,2);
\draw (3,1) node {\parbox{3cm}{\centering $\text{Right}$ \\ $\text{outer region}$}};

\draw[->, ultra thick, bend left=20]  (-1.2,-1) to (-1.5,0);

\draw[->, ultra thick, bend left=-20]  (1.2,-1) to (1.5,0);
\draw (0,-1.5) node[above] {\small Matching regions};

\end{tikzpicture}
    \caption{A sketch of the regions used for the matched asymptotic expansions.}
	\label{domain-1D-regions}
\end{figure}

    \subsection{Matched Asymptotics for Corner Layer Problems}

The phenomenon that we will encounter herein is what is called a \emph{corner layer}~\cite{Miller2006}. To be more precise, $u_\varepsilon$ experiences a corner layer at $x = 0$. See, for example, Figure~\ref{fig:CornerLayer}. The idea of asymptotic analysis is to build a uniformly-valid composite solution, $u_{c,0}(x;\varepsilon)$ by combining the zeroth-order outer solution, $u_0$, and the zeroth-order inner solution, $U_0$, in a process called matching. Then, we can show that, formally, as $\varepsilon \to 0$, 
    \begin{equation}
\left\| u_\varepsilon(\, \cdot \, ) - u_{c,0}(\, \cdot \, ;\varepsilon) \right\|_{L^\infty(-1,1)} \le C_1\varepsilon,
    \label{eqn:comp-approx}
    \end{equation}
for some $C_1>0$. For corner layer phenomena, the zeroth-order approximation is continuous, that is,  $u_0\in C^0([-1,1])$, in contrast with the boundary layer case. Furthermore, for corner layer problems specifically, we typically expect that,  as $\varepsilon \to 0$,
    \[
\left\| u_0(\, \cdot \, ) - u_{c,0}(\, \cdot \, ;\varepsilon) \right\|_{L^\infty(-1,1)} \le C_2\varepsilon,
    \]
for some $C_2>0$. (Note that the last estimate is not generally true for traditional boundary layer problems.)  If this is the case, using the triangle inequality, the desired estimate follows: as $\varepsilon \to 0$,
    \begin{equation}
\left\| u_0(\, \cdot \, ) - u_\varepsilon(\, \cdot \, ) \right\|_{L^\infty(-1,1)} \le C_3\varepsilon,
    \label{eqn:basic-approx}
    \end{equation}
for some $C_3>0$, where, precisely, it will be shown that $u_0$ is the solution of \qref{1D-bvp1}--\qref{1D-bvp5} over $\Omega = (-1,1)$.

Finally, it is possible to argue that, in fact, as $\varepsilon \to 0$,
    \[
\left\| u_0(\, \cdot \, ) - u_\varepsilon(\, \cdot \, ) \right\|_{L^\infty(-1,1)} \le C_4\varepsilon^2,
    \]
for some $C_4>0$, only if $u_1\equiv 0$, where $u_1$ is the first-order correction term in the outer expansion. The arguments are formal and are omitted for the sake of brevity. Herein we will show definitively that $u_1\not\equiv 0$, so that asymptotic convergence is, at best, only first order for the two-sided problem. This fact will be reinforced by numerical experiments.

    \begin{figure}
        \centering
        \includegraphics[width=0.65\linewidth]{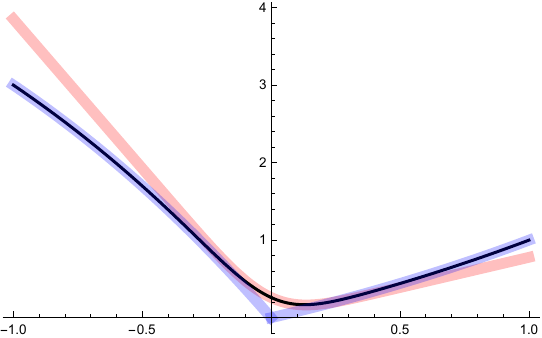}
        \caption{An example of a corner layer. The translucent blue curve is the zeroth-order outer approximation, $u_0$, and the translucent red curve represents the zeroth-order inner approximation, $U_0$. The solid black line is the diffuse-domain solution $u_\varepsilon(x)$. Not shown is the composite approximation, $u_{c,0}(x;\varepsilon)$, obtained by combining $u_0$ and $U_0$ via a process known as matching. See estimate \eqref{eqn:comp-approx} and associated discussion.}
        \label{fig:CornerLayer}
    \end{figure}

\subsection{Outer expansions}
We define $u_{\varepsilon, L} := \restr{u_\varepsilon}{\Omega_L}$ and $u_{\varepsilon, R} := \restr{u_\varepsilon}{\Omega_R}$.
The outer expansion for the function $u_{\varepsilon, L} (x)$ in the left outer region is
\begin{align}\label{outer-u-L}
	u_{\varepsilon, L} (x) = u_{L,0} (x) + \varepsilon u_{L,1} (x) + \varepsilon^2 u_{L,2} (x)+ \cdots 
\end{align}
Since $\phi_\varepsilon(x)$ and $\phi'_\varepsilon(x)$ exponentially converge to $0$ as $\varepsilon \to 0$ in the left outer region, 
by substituting the expansion \qref{outer-u-L} into Equation~\qref{1D-diff-bvp1}, 
we get
\begin{align}
	-\alpha (u''_{L,0} + \varepsilon u''_{L,1} + \varepsilon^2 u''_{L,2} + \cdots ) + \beta (u_{L,0} + \varepsilon u_{L,1} + \varepsilon^2 u_{L,2} + \cdots ) = h.
\end{align}
Combining the equation above with the boundary condition $u'_{\varepsilon, L} (-1) = 0$, we obtain:
\begin{align}
	\label{outer-u-L0} O(1): \; &-\alpha u''_{L,0} + \beta u_{L,0} = h, \quad u'_{L,0} (-1) = 0, \\	
	\label{outer-u-Lj} O(\varepsilon^j): \; &-\alpha u''_{L,j} + \beta u_{L,j} = 0, \quad u'_{L,j} (-1) = 0, \text{ for any } j = 1,2,\ldots
\end{align}

On the other hand, the outer expansion for the function $u_{\varepsilon, R} (x)$ in the right outer region is
\begin{align}\label{outer-u-R}
	u_{\varepsilon, R} (x) = u_{R,0} (x) + \varepsilon u_{R,1} (x) + \varepsilon^2 u_{R,2} (x)+ \cdots 
\end{align}
Since $\phi_\varepsilon(x)$ exponentially converges to $1$ and $\phi'_\varepsilon(x)$ exponentially converges to $0$ as $\varepsilon \to 0$ in the right outer region, 
by substituting the expansion \qref{outer-u-R} into Equation~\qref{1D-diff-bvp1}, 
we get
\begin{align}
	-(u''_{R,0} + \varepsilon u''_{R,1} + \varepsilon^2 u''_{R,2} + \cdots ) + \gamma (u_{R,0} + \varepsilon u_{R,1} + \varepsilon^2 u_{R,2} + \cdots ) = q.
\end{align}
Combining the equation above with the boundary condition $u'_{\varepsilon, R} (1) = 0$, we obtain:
\begin{align}
	\label{outer-u-R0} O(1): \; &-u''_{R,0} + \gamma u_{R,0} = q, \quad u'_{R,0} (1) = 0, \\	
	\label{outer-u-Rj} O(\varepsilon^j): \; &-u''_{R,j} + \gamma u_{R,j} = 0, \quad u'_{R,j} (1) = 0, \text{ for any } j = 1,2,\ldots 
\end{align}

\begin{rema}
    If the pair $(u_{R,0}, u_{L,0})$ satisfies \qref{1D-bvp1}--\qref{1D-bvp5}, then $u_\varepsilon$ converges asymptotically to $u_0$. 
    Furthermore, if $(u_{R,1}, u_{L,1}) \neq (0,0)$, the diffuse domain approximation is asymptotically first-order accurate. 
    However, if $(u_{R,1}, u_{L,1}) = (0,0)$, then the diffuse domain approximation is asymptotically second-order accurate. 
\end{rema}

\subsection{Supporting Lemmas}
Before analyzing the inner expansions, we prove the following technical lemmas.

\begin{lemma}\label{rational-lemma}
	If $F(z) = P(z) + o(1)$ and $G(z) = A + o(z^{-m})$ as $z \to \infty$ (or $z \to -\infty$), 
	where $P(z)$ is a polynomial, 
	$A \neq 0$ is a constant, 
	and $m > \deg(P)$ is an integer, then
	\begin{align*}
		\frac{F(z)}{G(z)} = \frac{P(z)}{A} + o(1) \quad  \text{as } z \to \infty \text{ (or  $z \to -\infty$)}.
	\end{align*}	
\end{lemma}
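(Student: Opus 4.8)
The plan is to reduce the claim to an elementary estimate on a single rational expression. Writing the difference over a common denominator,
\[
\frac{F(z)}{G(z)} - \frac{P(z)}{A} = \frac{A\,F(z) - P(z)\,G(z)}{A\,G(z)},
\]
I would show that the numerator tends to $0$ while the denominator stays bounded away from $0$; the $o(1)$ conclusion is then immediate. The denominator is the easy half: since $G(z)\to A\neq 0$, there is a neighborhood of infinity on which $|G(z)|\ge |A|/2$, so $|A\,G(z)|\ge |A|^2/2>0$ there.

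The substance is in the numerator. Substituting the two hypotheses and cancelling the matching polynomial terms $A\,P(z)$, I obtain
\[
A\,F(z) - P(z)\,G(z) = A\cdot o(1) - P(z)\cdot o(z^{-m}),
\]
so it suffices to show both terms on the right vanish as $z\to\infty$ (or $z\to-\infty$). The first is $o(1)$ trivially. The second term is the crux, and the only place the hypothesis $m>\deg(P)$ enters. Setting $d:=\deg(P)$ and using $|P(z)|\le C|z|^{d}$ for large $|z|$, I would factor
\[
\bigl| P(z)\cdot o(z^{-m}) \bigr| \le C\,|z|^{\,d-m}\,\bigl| z^{m}\, o(z^{-m}) \bigr|,
\]
and note that $|z|^{\,d-m}\le 1$ for $|z|\ge 1$ (because $d-m\le -1$), while $z^{m}\,o(z^{-m})\to 0$ by the definition of little-$o$. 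Hence the product tends to $0$, the entire numerator is $o(1)$, and dividing by the bounded-below denominator finishes the argument.

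I do not anticipate a genuine obstacle: the proof is a one-line algebraic rearrangement followed by two limits. The only subtlety worth stating carefully is the bookkeeping of the little-$o$ symbols, which I would handle by fixing a representative function $\eta(z)$ for $o(z^{-m})$ (so that $z^{m}\eta(z)\to 0$) and likewise for $o(1)$, rather than manipulating the $o(\cdot)$ notation formally. This makes transparent that the prescribed decay rate $z^{-m}$ is strong enough to dominate the polynomial growth of $P$ precisely when $m>\deg(P)$, which is exactly the role of that hypothesis.
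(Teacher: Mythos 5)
Your proposal is correct and follows essentially the same route as the paper's proof: the same decomposition $\frac{F(z)}{G(z)} - \frac{P(z)}{A} = \frac{A\tilde F(z) - P(z)\tilde G(z)}{A\,G(z)}$, the same observation that the denominator stays bounded away from zero, and the same use of $m > \deg(P)$ to kill the term $P(z)\cdot o(z^{-m})$. The only cosmetic difference is that you bound $|P(z)|\le C|z|^{d}$ uniformly, whereas the paper handles $P(z)\tilde G(z)$ monomial by monomial, writing $B_j z^j \tilde G(z) = \frac{B_j}{z^{m-j}}\, z^m \tilde G(z)$; both arguments are equivalent.
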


    \begin{proof}	
We will give the proof for the case $z \to \infty$. 
The proof for the case $z \to -\infty$ is similar. 
Assume that
\begin{align}
	P(z) = B_0 + B_1 z + B_2 z^2 + \cdots + B_p z^p,
\end{align}
where $0 \leq p < m$ is an integer, 
and $B_0, B_1,\ldots , B_p$ are constants. 
By the hypothesis, we have
\begin{align}
	F(z) &= P(z) + \tilde{F}(z), \; \text{ where } \lim_{z \to \infty} \tilde{F}(z) = 0,  \\
	G(z) &= A + \tilde{G}(z), \; \text{ where } \lim_{z \to \infty} z^m \tilde{G}(z) = 0.
\end{align}
Thus,
\begin{align}
	\frac{F(z)}{G(z)} - \frac{P(z)}{A} = \frac{P(z) + \tilde{F}(z)}{A + \tilde{G}(z)} - \frac{P(z)}{A}
	= \frac{A \tilde{F}(z) - P(z) \tilde{G}(z)}{A^2 + A \tilde{G}(z)}.
\end{align}
Since $m > 0$ and $\lim_{z \to \infty} z^m \tilde{G}(z) = 0$, then $\lim_{z \to \infty}  \tilde{G}(z)$ cannot equal $-A$. 
Hence, $\lim_{z \to \infty}  (A^2 + A \tilde{G}(z)) \neq 0$. 
On the other hand, 
\begin{align}
	P(z) \tilde{G}(z) = \sum_{j=0}^{p} B_j z^j \tilde{G}(z)
	= \sum_{j=0}^{p} \frac{B_j}{z^{m-j}} z^m \tilde{G}(z) \to 0 \quad \text{as } z \to \infty.
\end{align}
Therefore, we obtain
\begin{align}
	\lim_{z \to \infty} \left( \frac{F(z)}{G(z)} - \frac{P(z)}{A} \right) = 
	\lim_{z \to \infty} \frac{A \tilde{F}(z) - P(z) \tilde{G}(z)}{A^2 + A \tilde{G}(z)} = 0,
\end{align}
which implies that
\begin{align*}
	\frac{F(z)}{G(z)} = \frac{P(z)}{A} + o(1) \quad \text{as } z \to \infty.
\end{align*}	
\end{proof}

\begin{lemma}\label{integral-lemma}
	If $F(z)$ and $G(z)$ are continuous functions on $(-\infty, \infty)$, and $F(z) = G(z) + o(1)$ as $z \to \infty$ (or $z \to -\infty$), then
	\begin{align*}
		\int_0^z F(s)ds = \int_0^z G(s)ds + o(z) \quad  \text{as } z \to \infty \text{ (or  $z \to -\infty$)}.
	\end{align*}
\end{lemma}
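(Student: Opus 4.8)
The plan is to reduce the statement to a Cesàro-mean (averaging) estimate on the error term. Writing $\eta(s) := F(s) - G(s)$, the hypothesis $F = G + o(1)$ says precisely that $\eta$ is continuous on $(-\infty,\infty)$ with $\eta(z) \to 0$ as $z \to \infty$. Since $\int_0^z F(s)\,ds - \int_0^z G(s)\,ds = \int_0^z \eta(s)\,ds$, the desired conclusion $\int_0^z F = \int_0^z G + o(z)$ is equivalent to showing
\begin{align*}
	\frac{1}{z}\int_0^z \eta(s)\,ds \to 0 \quad \text{as } z \to \infty.
\end{align*}

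First I would fix an arbitrary tolerance $\delta > 0$ and use $\eta(z) \to 0$ to choose $M > 0$ with $|\eta(s)| < \delta$ for all $s \ge M$. Then I would split the integral at $M$:
\begin{align*}
	\frac{1}{z}\int_0^z \eta(s)\,ds = \frac{1}{z}\int_0^M \eta(s)\,ds + \frac{1}{z}\int_M^z \eta(s)\,ds.
\end{align*}
For the first piece, note that $C_M := \int_0^M \eta(s)\,ds$ is a finite constant (the integral of a continuous function over the compact interval $[0,M]$), so $\frac{1}{z} C_M \to 0$ and in particular can be made smaller than $\delta$ for all sufficiently large $z$. For the second piece, the choice of $M$ gives, for $z > M$,
\begin{align*}
	\left| \frac{1}{z}\int_M^z \eta(s)\,ds \right| \le \frac{1}{z}\int_M^z |\eta(s)|\,ds \le \frac{\delta (z - M)}{z} \le \delta.
\end{align*}
Combining the two estimates, $\left| \frac{1}{z}\int_0^z \eta(s)\,ds \right| \le 2\delta$ for all $z$ large enough, and since $\delta$ was arbitrary the limit is zero. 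For the case $z \to -\infty$ I would run the identical argument, choosing $M < 0$ with $|\eta(s)| < \delta$ for $s \le M$, splitting at $M$, and bounding $\frac{1}{|z|}\int_z^M |\eta|$ in place of the tail above; the compactness of the remaining finite interval $[M,0]$ again kills the fixed part after division by the growing factor $|z|$.

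I do not expect any genuine obstacle here: the result is the integral analogue of the elementary fact that a convergent sequence is Cesàro-summable to its limit. The only point requiring a word of care is that the lemma provides just the one-sided decay $\eta(z) \to 0$ as $z \to \infty$, so I must not assume anything about $\eta$ near $-\infty$; this is exactly why the split is made at a finite threshold $M$, with the fixed contribution $\int_0^M \eta$ absorbed into the $o(z)$ term by dividing by the unboundedly growing factor $z$.
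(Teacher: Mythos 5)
Your proof is correct and follows essentially the same route as the paper's: split the error integral at a finite threshold beyond which $|F-G|$ is small, bound the tail by the tolerance, and let the fixed contribution from the compact piece vanish after dividing by $z$. The only cosmetic difference is that you absorb the constant term with the standard ``$2\delta$'' device, whereas the paper carries out an explicit case analysis on the sign of $M_0 - \tfrac{\xi}{2}z_0$ to land exactly under $\xi$; the substance is identical.
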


    \begin{proof}		
Again, we will give the proof for the case $z \to \infty$, 
and the proof for the case $z \to -\infty$ is similar. 
Pick an arbitrary number $\xi > 0$. 
By the hypothesis, we have 
\begin{align}
	\lim_{z \to \infty} (F(z) - G(z)) = 0.
\end{align}
Hence, there exists a number $z_0 > 0$, depending on $\xi$, such that, for any $z > z_0$,
\begin{align}
	|F(z) - G(z)| < \frac{\xi}{2}.
\end{align}
Therefore, for any $z > z_0 > 0$, we have
\begin{align}
	\left| \frac{1}{z} \int_0^z (F(s) - G(s)) \;ds \right| &\leq \frac{1}{z} \int_0^z |F(s) - G(s)| \;ds \nonumber \\
	&\leq \frac{1}{z} \int_0^{z_0} |F(s) - G(s)| \;ds + \frac{1}{z} \int_{z_0}^z |F(s) - G(s)|\; ds \nonumber \\
	&\leq \frac{M_0}{z} + \frac{\xi}{2z} (z - z_0)   \nonumber \\
	&\leq \frac{\xi}{2} + \frac{1}{z} \left( M_0 - \frac{\xi}{2} z_0 \right),
\end{align}
where $M_0 := \int_0^{z_0} |F(s) - G(s)| \;ds < \infty$. 
If $M_0 - \frac{\xi}{2} z_0 \leq 0$, then 
\begin{align}
	\left| \frac{1}{z} \int_0^z (F(s) - G(s)) \;ds \right| \leq \frac{\xi}{2} < \xi,
\end{align}
for any $z > z_0$.  If $M_0 - \frac{\xi}{2} z_0 > 0$, define $z_1 := \max \left\{ z_0, \frac{2}{\xi} \left( M_0 - \frac{\xi}{2} z_0 \right) \right\} > 0$. Thus, for any $z > z_1$, we have
\begin{align}
	\left| \frac{1}{z} \int_0^z (F(s) - G(s))\; ds \right| &\leq  \frac{\xi}{2} + \frac{1}{z} \left( M_0 - \frac{\xi}{2} z_0 \right) \nonumber \\
	&\leq \frac{\xi}{2} + \frac{\xi}{2 \left( M_0 - \frac{\xi}{2} z_0 \right)} \left( M_0 - \frac{\xi}{2} z_0 \right) \nonumber \\
	&\leq \xi.
\end{align}
In both cases, we have 
\begin{align}
	\left| \frac{1}{z} \int_0^z (F(s) - G(s))\; ds \right| \leq \xi,
\end{align}
for $z > 0$ sufficiently large. 
Therefore, 
\begin{align}
	\lim_{z \to \infty}  \frac{1}{z} \int_0^z (F(s) - G(s)) \;ds = 0,
\end{align}
which implies that
\begin{align*}
	\int_0^z F(s)ds = \int_0^z G(s)ds + o(z) \quad \text{as } z \to \infty.
\end{align*}
\end{proof}

\subsection{Inner expansions}
We introduce a stretched variable
\begin{align*}
	z = \frac{x}{\varepsilon},
\end{align*}
and transform the derivatives as
\begin{align*}
	\frac{d}{dx} = \frac{1}{\varepsilon} \frac{d}{dz}, \quad
	\frac{d^2}{dx^2} = \frac{1}{\varepsilon^2} \frac{d^2}{dz^2}.
\end{align*}
Furthermore, we define the functions
\begin{align}
	U(z, \varepsilon) &:= u_\varepsilon(\varepsilon z), \\
	\label{phi(z)} \phi(z) &:= \phi_\varepsilon(\varepsilon z) = \frac{1}{2}[1+\tanh(z)], \\
	\label{D(z)} D(z) &:= D_\varepsilon(\varepsilon z) = \alpha + (1 - \alpha)\phi(z), \\
	\label{c(z)} c(z) &:= c_\varepsilon(\varepsilon z) = \beta + (\gamma - \beta)\phi(z), \\
	F(z, \varepsilon) &:= f_\varepsilon(\varepsilon z).
\end{align}
Then, Equation~\qref{1D-diff-bvp1} becomes
\begin{align}
    -\frac{1}{\varepsilon^2}\frac{d}{dz}\left(D(z) \frac{d}{dz}U(z, \varepsilon)\right) + c(z) U(z, \varepsilon) + \frac{1}{\varepsilon}(\kappa U(z, \varepsilon) + \lambda) \phi'(z) = F(z, \varepsilon),
\end{align}
which is equivalent to
\begin{align}\label{1D-diff-bvp1-inner}
	-\frac{d}{dz}\left(D(z) \frac{d}{dz}U(z, \varepsilon)\right) + c(z) U(z, \varepsilon) \varepsilon^2 + (\kappa U(z, \varepsilon) + \lambda) \phi'(z) \varepsilon = F(z, \varepsilon) \varepsilon^2.
\end{align}
Since $h(x)$ and $q(x)$ are analytic functions, we can expand them as
\begin{align}
	h(x) &= h_0 + h_1 x + h_2 x^2 + \cdots , \\
	q(x) &= q_0 + q_1 x + q_2 x^2 + \cdots ,	
\end{align}
where $h_j$ and $q_j$ are constants, for $j = 0,1,2,\ldots$ 
Hence,
\begin{align}\label{inner-F}
	F(z, \varepsilon) = f_\varepsilon(\varepsilon z) 
	&= h(\varepsilon z) + [q(\varepsilon z) - h(\varepsilon z)]\phi(z) \nonumber \\
	&= \sum_{j=1}^\infty [h_j + (q_j - h_j)\phi(z)]z^j \varepsilon^j.
\end{align}
The inner expansion for the function $U(z, \varepsilon)$ in the inner region is
\begin{align}\label{inner-U}
	U(z, \varepsilon) = U_0 (z) + \varepsilon U_1 (z) + \varepsilon^2 U_2 (z)+ \cdots 
\end{align}
We assume that $U_0(z), U_1(z), U_2(z), \ldots$ are continuous functions on $(-\infty, \infty)$, 
which is consistent with the assumption that $u_\varepsilon(x)$ is globally continuous across $\Omega$. 
Substituting the expansions \qref{inner-F} and \qref{inner-U} into Equation~\qref{1D-diff-bvp1-inner}, 
we get
\begin{align}
	&-\sum_{j=1}^\infty (D(z) U_j'(z))' \varepsilon^j + \sum_{j=1}^\infty c(z) U_j(z) \varepsilon^{j+2} + \left( \sum_{j=1}^\infty \kappa U_j(z) \varepsilon^{j+1} + \lambda\varepsilon \right) \phi'(z) \nonumber \\
	&= \sum_{j=1}^\infty [h_j + (q_j - h_j)\phi(z)]z^j \varepsilon^{j+2}.
\end{align}
Hence, we obtain
    \begin{align}
	\label{inner-U-R0} 
O(1): \; &-(D(z) U_0'(z))' = 0, 
    \\	
	\label{inner-U-R1} 
O(\varepsilon): \; &-(D(z) U_1'(z))' + (\kappa U_0(z) + \lambda) \phi'(z) = 0,     \\	
	\label{inner-U-Rj} 
O(\varepsilon^j): \; &-(D(z) U_j'(z))' + c(z) U_{j-2}(z) + \kappa U_{j-1}(z)\phi'(z) \nonumber \\
&=  [h_j + (q_j - h_j)\phi(z)]z^{j-2},\quad j = 2,3,\ldots 
    \end{align}

\subsubsection{Equation~\qref{1D-diff-bvp1-inner} at $O(1)$}
From \qref{inner-U-R0}, we get 
\begin{align}
	(D(z) U_0'(z))' = 0. 
\end{align}
Then, there exists a constant $C_{0,1}$ such that
\begin{align}
	D(z) U_0'(z) = C_{0,1}. 
\end{align}
There exists another constant $C_{0,0}$ such that
\begin{align}
	U_0(z) = C_{0,1} \int_0^z \frac{1}{D(s)} \;ds + C_{0,0}. 
\end{align}
We define
\begin{align}
	I_0 (z) := \int_0^z  \frac{1}{D(s)}\; ds.
\end{align}
Using the definition of $D(z)$, given by
\begin{align}
	D(z) = \alpha + (1 - \alpha)\phi(z) = \frac{e^z + \alpha e^{-z}}{e^z + e^{-z}},
\end{align}
we obtain
\begin{align}\label{I-0}
	I_0 (z) &= \int_0^z \frac{e^s + e^{-s}}{e^s + \alpha e^{-s}}\; ds \nonumber \\
	& = \frac{(\alpha - 1) \log(\alpha + e^{2z}) + 2z}{2\alpha} - \frac{(\alpha - 1) \log(\alpha + 1)}{2\alpha}.
\end{align}
We will use the Taylor series to expand $\log(\alpha + e^{2z})$.

\begin{rema}
    The Taylor series expansion of $\log(1+y)$, for $|y| < 1$, is given by:    
    \begin{align}\label{Taylor-series-1}
	   \log(1+y) = y - \frac{1}{2} y^2 + \frac{1}{3} y^3 - \frac{1}{4} y^4 + \cdots 
    \end{align}
\end{rema}

For $z > 0$ sufficiently large, applying \qref{Taylor-series-1} with $y = \alpha e^{-2z}$, we obtain
    \begin{align*}
\log(\alpha + e^{2z}) & = \log[e^{2z}(\alpha e^{-2z} + 1)]  
    \nonumber  
    \\
& = 2z + \log(\alpha e^{-2z} + 1)  
    \nonumber  
    \\
& = 2z + \alpha e^{-2z} - \frac{1}{2} \left(\alpha e^{-2z}\right)^2 + \frac{1}{3} \left(\alpha e^{-2z}\right)^3 - \frac{1}{4} \left(\alpha e^{-2z}\right)^4 + \cdots ,
    \end{align*}
which implies that
    \begin{align}
    \label{log-alpha-right}
\log(\alpha + e^{2z}) = 2z + o(1) \quad \text{as } z \to \infty.
    \end{align}
Substituting the equation above into \qref{I-0}, we get
\begin{align}\label{right-I-0}
	I_0 (z) &= \frac{(\alpha - 1) (2z + o(1)) + 2z}{2\alpha} - \frac{(\alpha - 1) \log(\alpha + 1)}{2\alpha} \nonumber \\
	& = z - \frac{(\alpha - 1) \log(\alpha + 1)}{2\alpha} + o(1) \quad \text{as } z \to \infty.
\end{align}
For $z < 0$ with $|z|$ sufficiently large, applying \qref{Taylor-series-1} with $y = e^{2z}/\alpha$, we obtain
\begin{align*}
	\log(\alpha + e^{2z}) &= \log \left[\alpha \left(1 + \frac{e^{2z}}{\alpha} \right)\right]  \nonumber  \\
	&= \log \alpha + \log \left(1 + \frac{e^{2z}}{\alpha} \right)  \nonumber  \\
	&= \log \alpha + \frac{e^{2z}}{\alpha} - \frac{1}{2} \left( \frac{e^{2z}}{\alpha} \right)^2 + \frac{1}{3} \left( \frac{e^{2z}}{\alpha} \right)^3 - \frac{1}{4} \left( \frac{e^{2z}}{\alpha} \right)^4 + \cdots ,
\end{align*}
which implies that
\begin{align}\label{log-alpha-left}
	\log(\alpha + e^{2z}) = \log \alpha + o(1) \quad \text{as } z \to -\infty.
\end{align}
Substituting the equation above into \qref{I-0}, we get
\begin{align}\label{left-I-0}
	I_0 (z) &= \frac{(\alpha - 1) (\log \alpha + o(1)) + 2z}{2\alpha} - \frac{(\alpha - 1) \log(\alpha + 1)}{2\alpha} \nonumber \\
	& = \frac{1}{\alpha} z + \frac{\alpha - 1}{2\alpha} \log \left(\frac{\alpha}{\alpha+1}\right) + o(1) \quad \text{as } z \to -\infty.
\end{align}
Since $U_0(z) = C_{0,1} I_0(z) + C_{0,0}$, from \qref{right-I-0} and \qref{left-I-0}, we obtain
\begin{align}
	U_0 (z) = 
	\begin{cases} 
		\displaystyle C_{0,1}z - C_{0,1} \frac{(\alpha - 1) \log(\alpha + 1)}{2\alpha} + C_{0,0} + o(1) \quad \text{as } z \to \infty, \\
		\\
		\displaystyle \frac{C_{0,1}}{\alpha}z - C_{0,1} \frac{\alpha - 1}{2\alpha} \log \left(\frac{\alpha}{\alpha+1}\right) + C_{0,0} + o(1) \quad \text{as } z \to -\infty.
	\end{cases}
\end{align}
Then, using the matching conditions
\begin{align}
	U_0(z) + o(1) &= u_{R,0}(0) \quad \text{as } z \to \infty, \\
	U_0(z) + o(1) &= u_{L,0}(0) \quad \text{as } z \to -\infty,
\end{align}
we conclude that
\begin{align}\label{u-LR0-at-0}
	C_{0,1} = 0 \quad \text{and} \quad  u_{R,0}(0) = C_{0,0} = u_{L,0}(0).
\end{align}
As a consequence, the zeroth-order inner approximation is a constant function,
    \[
U_0(z) \equiv u_{R,0}(0) = u_{L,0}(0),
    \]
and, using the standard construction~\cite{Miller2006}, the uniformly-valid, zeroth-order composite asymptotic approximation is simply
    \begin{align*}
u_{c,0}(x;\varepsilon) := 
    \begin{cases}
u_{R,0}(x),  &\text{if } x \in (-1,0], 
    \\
u_{L,0}(x),  &\text{if } x \in (0,1),
	\end{cases}
    \end{align*}
which is generally only a $C^0([-1,1])$ function.

Looking ahead,  we next intend to show that $u_{R,0}$ and $u_{L,0}$ satisfy the Robin-type boundary condition in \eqref{1D-bvp4}. Going a little further in the analysis, we will observe that the composite approximation, $u_{c,0}$ is at best only first-order, by observing that the outer first-order corrections, $u_{R,1}$ and $u_{L,1}$, are not identically zero, in the general setting.

\subsubsection{Equation~\qref{1D-diff-bvp1-inner} at $O(\varepsilon)$}
Since $U_0(z) \equiv C_{0,0}$, from \qref{inner-U-R1} we have 
\begin{align}
	-(D(z) U_1'(z))' + (\kappa C_{0,0} + \lambda) \phi'(z) = 0. 
\end{align}
Then, here exists a constant $C_{1,1}$ such that
\begin{align}
	-D(z) U_1'(z) + (\kappa C_{0,0} + \lambda) \phi(z) = C_{1,1}. 
\end{align}
There exists another constant $C_{1,0}$ such that
\begin{align}
	U_1(z) =  (\kappa C_{0,0} + \lambda) \int_0^z \frac{\phi(s)}{D(s)} ds + C_{1,1} \int_0^z \frac{1}{D(s)} ds + C_{1,0}. 
\end{align}
We define
\begin{align}
	I_1 (z) := \int_0^z  \frac{\phi(s)}{D(s)} \;ds.
\end{align}
Using the definitions of $\phi(z)$ and $D(z)$, given by
\begin{align}
	\phi(z) = \frac{e^z}{e^z + e^{-z}} \quad \text{and} \quad D(z) = \frac{e^z + \alpha e^{-z}}{e^z + e^{-z}},
\end{align}
we obtain
\begin{align}
	I_1 (z) &= \int_0^z \frac{e^s}{e^s + \alpha e^{-s}} \;ds \nonumber \\
	& = \frac{1}{2} \log(\alpha + e^{2z}) - \frac{1}{2}\log(\alpha + 1).
\end{align}
From \qref{log-alpha-right} and \qref{log-alpha-left}, we have
\begin{align}
	\log(\alpha + e^{2z}) &= 2z + o(1) \quad \text{as } z \to \infty, \\
	\log(\alpha + e^{2z}) &= \log \alpha + o(1) \quad \text{as } z \to -\infty.
\end{align}
Therefore,
\begin{align}\label{I-1}
	I_1 (z) &= 
	\begin{cases} 
	\displaystyle \frac{1}{2} (2z + o(1)) - \frac{1}{2} \log(\alpha + 1) \quad \text{as } z \to \infty, \\
	\\
	\displaystyle \frac{1}{2} (\log \alpha + o(1)) - \frac{1}{2} \log(\alpha + 1) \quad \text{as } z \to -\infty, \nonumber
	\end{cases}
	\\ &= 
	\begin{cases} 
		\displaystyle z - \frac{1}{2} \log(\alpha + 1) + o(1) \quad \text{as } z \to \infty, \\
		\\
		\displaystyle \frac{1}{2} \log \left(\frac{\alpha}{\alpha + 1}\right) + o(1) \quad \text{as } z \to -\infty.
	\end{cases}
\end{align}
Since $U_1(z) = (\kappa C_{0,0} + \lambda) I_1(z) + C_{1,1} I_0(z) + C_{1,0}$, we obtain
\begin{align}\label{U1-right}
	U_1 (z) =& \; (\kappa C_{0,0} + \lambda + C_{1,1})z + \left( -\frac{1}{2}(\kappa C_{0,0} + \lambda) - C_{1,1}\frac{\alpha - 1}{2\alpha} \right) \log(\alpha + 1)  \nonumber \\
    &+ C_{1,0} + o(1) \quad \text{as } z \to \infty,
\end{align}
and
\begin{align}\label{U1-left}
	U_1 (z) =& \; \frac{C_{1,1}}{\alpha}z + \left( \frac{1}{2}(\kappa C_{0,0} + \lambda) + C_{1,1}\frac{\alpha - 1}{2\alpha} \right) \log \left( \frac{\alpha}{\alpha + 1} \right)   \nonumber \\
    &+ C_{1,0} + o(1) \quad \text{as } z \to -\infty,
\end{align}
Then, using the matching conditions
\begin{align}
	\label{match-U1-R} U_1(z) + o(1) &= u_{R,1}(0) + u'_{R,0}(0)z \quad \text{as } z \to \infty, \\
	\label{match-U1-L} U_1(z) + o(1) &= u_{L,1}(0) + u'_{L,0}(0)z \quad \text{as } z \to -\infty,
\end{align}
we obtain 
\begin{align}
	\kappa C_{0,0} + \lambda + C_{1,1} = u'_{R,0}(0) \quad \text{and} \quad  \frac{C_{1,1}}{\alpha} = u'_{L,0}(0).
\end{align}
Combining the two equations above with $C_{0,0} = u_{R,0}(0)$, we get
\begin{align}\label{u'R0(0)-au'R0(0)}
	u_{R,0}' (0) - \alpha u_{L,0}' (0) = \kappa u_{R,0} (0) + \lambda.
\end{align}
From \qref{outer-u-L0}, \qref{outer-u-R0}, \qref{u-LR0-at-0} and \qref{u'R0(0)-au'R0(0)}, we see that $u_{R,0}(x)$ and $u_{L,0}(x)$ are determined by the following boundary value problem
\begin{align}
	-u_{R,0}'' + \gamma u_{R,0} &= q, \quad \text{in } \Omega_R, \\
	-\alpha u_{L,0}'' + \beta u_{L,0} &= h, \quad \text{in } \Omega_L, \\
	u_{R,0}(0) &= u_{L,0}(0), \\
	u_{R,0}' (0) - \alpha u_{L,0}' (0) &= \kappa u_{R,0} (0) + \lambda, \\
	u_{L,0}'(-1) &= 0 = u_{R,0}'(1),
\end{align}
which means, $(u_{R,0}, u_{L,0})$ is a pair of solutions to the two-sided problem \qref{1D-bvp1}--\qref{1D-bvp5}. 
Therefore, the solution $u_\varepsilon$ of the diffuse domain approximation problem \qref{1D-diff-bvp1}--\qref{1D-diff-bvp2} converges asymptotically to the solution $u_0$ of the two-sided problem \qref{1D-bvp1}--\qref{1D-bvp5}, 
since this problem has a unique solution $u_0$. 

Moreover, using the matching conditions \qref{match-U1-R} and \qref{match-U1-L} again, from \qref{U1-right} and \qref{U1-left}, we obtain
\begin{align}
	\label{uR1(0)} u_{R,1}(0) &= \left( -\frac{1}{2}(\kappa C_{0,0} + \lambda) - C_{1,1}\frac{\alpha - 1}{2\alpha} \right) \log(\alpha + 1) + C_{1,0}, \\
	\label{uL1(0)} u_{L,1}(0) &=  \left( \frac{1}{2}(\kappa C_{0,0} + \lambda) + C_{1,1}\frac{\alpha - 1}{2\alpha} \right) \log \left( \frac{\alpha}{\alpha + 1} \right) + C_{1,0}.
\end{align}
Hence,
\begin{align}\label{uL1(0)-uR1(0)}
	u_{L,1}(0) - u_{R,1}(0) 
	= \left( \frac{1}{2}(\kappa C_{0,0} + \lambda) + C_{1,1}\frac{\alpha - 1}{2\alpha} \right) \log\alpha.
\end{align}	
Substituting  $C_{0,0} = u_{R,0}(0)$ and  $C_{1,1} = \alpha u'_{L,0}(0)$ into Equation \qref{uL1(0)-uR1(0)}, and simplifying the resulting expression, we obtain
\begin{align}\label{uL1(0)-uR1(0)-b}
	u_{L,1}(0) - u_{R,1}(0) = \frac{\log\alpha}{2}[u_{R,0}' (0) - u_{L,0}' (0)].
\end{align}

\subsubsection{Equation~\qref{1D-diff-bvp1-inner} at $O(\varepsilon^2)$}
Since $U_0(z) = C_{0,0}$, from \qref{inner-U-Rj} we have 
\begin{align}
	-(D(z) U_2'(z))' + C_{0,0}c(z) + \kappa U_1(z) \phi'(z) = h_0 + (q_0 - h_0) \phi(z). 
\end{align}
Then, there exists a constant $C_{2,1}$ such that
\begin{align}\label{D-U'2-a}
	D(z) U_2'(z) =& \; C_{0,0} \int_0^z c(s)\;ds + \kappa \int_0^z U_1(s) \phi'(s)\; ds - h_0 z   \nonumber \\
    &+ (h_0 - q_0) \int_0^z \phi(s)\;ds + C_{2,1}. 
\end{align}
We define
\begin{align}
	I_2 (z) := \int_0^z \phi(s)\;ds.
\end{align}
Using the definition of $\phi(z)$, given by
\begin{align}
	\phi(z) = \frac{e^z}{e^z + e^{-z}},
\end{align}
we obtain
\begin{align}\label{I-2}
	I_2 (z) &= \int_0^z \frac{e^s}{e^s + e^{-s}}\; ds \nonumber \\
	& = \frac{1}{2} \log(1 + e^{2z}) - \frac{1}{2}\log 2.
\end{align}
Since $U_1(z) = (\kappa C_{0,0} + \lambda) I_1(z) + C_{1,1} I_0(z) + C_{1,0}$ and
\begin{align}\label{I0-I1-relation}
	I_0(z) = \frac{\alpha - 1}{\alpha} I_1(z) + \frac{z}{\alpha},
\end{align} 
we have
\begin{align}\label{U'1-phi}
	\int_0^z U_1(s) \phi'(s)\;ds =& \left( \kappa C_{0,0} + \lambda + \frac{\alpha - 1}{\alpha} \right)\int_0^z I_1(s) \phi'(s)\;ds   \nonumber \\
    &+ \frac{C_{1,1}}{\alpha} \int_0^z \phi'(s)s \;ds + C_{1,0} \int_0^z \phi'(s) \;ds \nonumber \\
	=& \left( \kappa C_{0,0} + \lambda + \frac{\alpha - 1}{\alpha} \right)\int_0^z I_1(s) \phi'(s)\;ds   \nonumber \\
    &+ \frac{C_{1,1}}{\alpha} \int_0^z \phi'(s)s \;ds + C_{1,0} \left( \phi(z) - \frac{1}{2} \right).
\end{align}
Substituting \qref{c(z)} and \qref{U'1-phi} into \qref{D-U'2-a}, we obtain
\begin{align}\label{D-U'2}
	D(z) U_2'(z)
	=& \; C_{0,0}[\beta z + (\gamma - \beta)I_2(z)] - h_0 z + (h_0 - q_0)I_2(z) \nonumber \\
	 &+ \left[ (\kappa^2 C_{0,0} +\kappa\lambda) + \frac{\alpha - 1}{\alpha}\kappa C_{1,1} \right] \int_0^z I_1(s) \phi'(s)\;ds  \nonumber \\
	 & + \frac{\kappa C_{1,1}}{\alpha} \int_0^z \phi'(s)s\; ds + \kappa C_{1,0} \phi(z) - \frac{\kappa C_{1,0}}{2} + C_{2,1}. 
\end{align}

\

\noindent $\bullet$ \textbf{Case 1: $\alpha \neq 1$.} 

Using integration by parts, we get
\begin{align}\label{s-phi's}
	\int_0^z \phi'(s)s \;ds &= \phi(s)s \bigg\rvert_0^z - \int_0^z \phi(s) \;ds  \nonumber \\
    &= \phi(z)z - I_2(z),
\end{align}
and
\begin{align}
	\int_0^z I_1(s)\phi'(s) \;ds &= I_1(s)\phi(s) \bigg\rvert_0^z - \int_0^z \phi(s) I'_1(s)\; ds \nonumber \\
	&= I_1(z)\phi(z) -\int_0^z \phi(s) \frac{\phi(s)}{D(s)}\;ds.
\end{align}
Since $D(s) = \alpha + (1 - \alpha)\phi(s)$, then
\begin{align}\label{phi/D}
	\frac{\phi(s)}{D(s)} = \frac{1}{1 - \alpha} - \frac{\alpha}{1 - \alpha} \frac{1}{D(s)}.
\end{align}
Thus,
\begin{align}
	\phi(s) \frac{\phi(s)}{D(s)} &= \frac{1}{1 - \alpha}\phi(s) - \frac{\alpha}{1 - \alpha} \frac{\phi(s)}{D(s)}  \nonumber \\
	&= \frac{1}{1 - \alpha}\phi(s) - \frac{\alpha}{1 - \alpha} \left( \frac{1}{1 - \alpha} - \frac{\alpha}{1 - \alpha} \frac{1}{D(s)} \right) \nonumber \\
	&= \frac{1}{1 - \alpha}\phi(s) - \frac{\alpha}{(1 - \alpha)^2} + \frac{\alpha^2}{(1 - \alpha)^2} \frac{1}{D(s)}.
\end{align}
Combining the equation above with \qref{I0-I1-relation}, we get
\begin{align}
	\int_0^z \phi(s) \frac{\phi(s)}{D(s)}\;ds 
	&= \frac{1}{1 - \alpha} I_2(z) - \frac{\alpha}{(1 - \alpha)^2} z + \frac{\alpha^2}{(1 - \alpha)^2} I_0(z) \nonumber \\
	&= \frac{1}{1 - \alpha} I_2(z) - \frac{\alpha}{(1 - \alpha)^2} z + \frac{\alpha^2}{(1 - \alpha)^2} \left(  \frac{\alpha - 1}{\alpha} I_1(z) + \frac{z}{\alpha} \right) \nonumber \\
	&= \frac{-1}{\alpha - 1} I_2(z) + \frac{\alpha}{(1 - \alpha)^2} I_1(z).
\end{align}
Therefore,
\begin{align}\label{I'1-phi}
	\int_0^z I_1(s)\phi'(s) \;ds = I_1(z)\phi(z) + \frac{1}{\alpha - 1} I_2(z) - \frac{\alpha}{(1 - \alpha)^2} I_1(z).
\end{align}
Substituting \qref{s-phi's} and \qref{I'1-phi} into \qref{D-U'2}, and simplifying the resulting expression, we get
\begin{align}\label{U'_2-a}
	U_2'(z) =& \; (C_{0,0}\beta - h_0)\frac{z}{D(z)} + \left( C_{0,0}(\gamma - \beta) + h_0 - q_0 + \frac{\kappa^2 C_{0,0} + \kappa\lambda}{\alpha - 1} \right)\frac{I_2(z)}{D(z)}  \nonumber \\
	&+ \left( \kappa^2 C_{0,0} + \kappa\lambda + \frac{\alpha - 1}{\alpha}\kappa C_{1,1} \right) \frac{I_1(z)\phi(z)}{D(z)}  \nonumber \\
	&+ \left( -(\kappa^2 C_{0,0} + \kappa\lambda)\frac{\alpha}{\alpha - 1} + \kappa C_{1,1} \right) \frac{I_1(z)}{D(z)} + \frac{\kappa C_{1,1}}{\alpha}\frac{\phi(z)z}{D(z)}  \nonumber \\
	& + \kappa C_{1,0}\frac{\phi(z)}{D(z)} + \left( \frac{-\kappa C_{1,0}}{2} + C_{2,1} \right)\frac{1}{D(z)}.
\end{align}
Using \qref{phi/D}, we obtain
\begin{align}
	\frac{I_1(z)\phi(z)}{D(z)} &= \frac{-I_1(z)}{\alpha - 1} + \frac{\alpha}{\alpha - 1} \frac{I_1(z)}{D(s)}, \\
	\frac{\phi(z)z}{D(z)} &= \frac{-z}{\alpha - 1} + \frac{\alpha}{\alpha - 1} \frac{z}{D(s)}.
\end{align}
Substituting the two equations above into \qref{U'_2-a} and simplifying the resulting expression, we get
\begin{align}\label{U'_2-b}
	U_2'(z) =& \left( C_{0,0}\beta - h_0 + \frac{\kappa C_{1,1}}{\alpha - 1}\right)\frac{z}{D(z)} + \left( \frac{-(\kappa^2 C_{0,0} + \kappa\lambda)}{\alpha - 1} + \frac{-\kappa C_{1,1}}{\alpha} \right) I_1(z)   \nonumber \\
	&+ \left( C_{0,0}(\gamma - \beta) + h_0 - q_0 + \frac{\kappa^2 C_{0,0} + \kappa\lambda}{\alpha - 1} \right)\frac{I_2(z)}{D(z)}  \nonumber \\
	&+ \frac{-\kappa C_{1,1}}{\alpha(\alpha - 1)} z + \frac{-\kappa C_{1,0}}{\alpha - 1} + \left( \frac{\alpha + 1}{2(\alpha - 1)}\kappa C_{1,0} + C_{2,1} \right)\frac{1}{D(z)}.
\end{align}

We examine the asymptotic limit of $U_2'(z)$ as $z \to \infty$ first.  
From \qref{I-1}, we have
\begin{align}\label{I1-right}
	I_1 (z) = z - \frac{1}{2} \log(\alpha + 1) + o(1) \quad \text{as } z \to \infty.
\end{align}
Since $z + ze^{-2z} = z + o(1)$, $1 + e^{-2z} = 1 + o(1)$, and $1+ \alpha e^{-2z} = 1 + o(z^{-10})$ as $z \to \infty$, 
applying Lemma~\ref{rational-lemma}, we get
\begin{align}
	\label{z/D-right} \frac{z}{D(z)} &= \frac{z + ze^{-2z}}{1+ \alpha e^{-2z}} = z + o(1) \quad \text{as } z \to \infty, \\
	\label{1/D-right} \frac{1}{D(z)} &= \frac{1 + e^{-2z}}{1+ \alpha e^{-2z}} = 1 + o(1) \quad \text{as } z \to \infty.
\end{align}
On the other hand,
\begin{align}
	\frac{I_2(z)}{D(z)} &= \frac{(\log(1 + e^{2z}) - \log2) (1 + e^{-2z})}{2(1 + \alpha e^{-2z})}.
\end{align}
For $z > 0$ sufficiently large, applying \qref{Taylor-series-1} with $y = e^{-2z}$, we obtain
\begin{align}\label{log1-expan-right}
	\log(1 + e^{2z}) &= \log[e^{2z}(e^{-2z} + 1)]  \nonumber  \\
	&= 2z + \log(e^{-2z} + 1)  \nonumber  \\
	&= 2z + \left( e^{-2z} - \frac{1}{2} (e^{-2z})^2 + \cdots  \right).
\end{align}
Hence,
\begin{align}
	(\log(1 + e^{2z}) - \log2) (1 + e^{-2z})  
	=& \log(1 + e^{2z}) + e^{-2z}\log(1 + e^{2z}) - \log2 - e^{-2z}\log2  \nonumber  \\
	=& \; 2z + \left( e^{-2z} - \frac{1}{2} (e^{-2z})^2 + \cdots  \right)  \nonumber  \\
	&+ 2z e^{-2z} + e^{-2z}\left( e^{-2z} - \frac{1}{2} (e^{-2z})^2 + \cdots  \right)  \nonumber  \\
    &- \log2 - e^{-2z}\log2, 
\end{align}
which implies that
\begin{align}
	(\log(1 + e^{2z}) - \log2) (1 + e^{-2z}) 
	= 2z - \log2  + o(1) \quad \text{as } z \to \infty.
\end{align}
Applying Lemma~\ref{rational-lemma} again, we get
\begin{align}\label{I2/D-right}
	\frac{I_2(z)}{D(z)} = z - \frac{\log2}{2}  + o(1) \quad \text{as } z \to \infty.
\end{align}
Substituting \qref{I1-right}, \qref{z/D-right}, \qref{1/D-right} and \qref{I2/D-right} into \qref{U'_2-b}, and simplifying the resulting expression, we obtain
\begin{align}
	U_2'(z) =& \; (C_{0,0} \gamma - q_0)z 
	+ \left( \frac{\kappa^2 C_{0,0} + \kappa\lambda}{\alpha - 1} + \frac{\kappa C_{1,1}}{\alpha}\right) \frac{\log(\alpha + 1)}{2} + \frac{\kappa C_{1,0}}{2}  \nonumber \\ 
	& + C_{2,1} - \left( C_{0,0}(\gamma - \beta) + h_0 - q_0 + \frac{\kappa^2 C_{0,0} + \kappa\lambda}{\alpha - 1} \right) \frac{\log2}{2}   \nonumber  \\
    & + o(1) \quad \text{as } z \to \infty.
\end{align}
Applying Lemma~\ref{integral-lemma}, we get
\begin{align}
	U_2(z) =& \; \frac{C_{0,0} \gamma - q_0}{2} z^2 
	+ \biggl[ \left( \frac{\kappa^2 C_{0,0} + \kappa\lambda}{\alpha - 1} + \frac{\kappa C_{1,1}}{\alpha}\right) \frac{\log(\alpha + 1)}{2} + \frac{\kappa C_{1,0}}{2}  \nonumber \\ 
	& + C_{2,1} - \left( C_{0,0}(\gamma - \beta) + h_0 - q_0 + \frac{\kappa^2 C_{0,0} + \kappa\lambda}{\alpha - 1} \right) \frac{\log2}{2} \biggr]z   \nonumber  \\
    & + o(z) \quad \text{as } z \to \infty.
\end{align}
Then, using the matching condition
\begin{align}
	\label{match-U2-R} U_2(z) + o(1) &= u_{R,2}(0) + u'_{R,1}(0)z + \frac{1}{2} u''_{R,0}(0)z^2 \quad \text{as } z \to \infty,
\end{align}
we obtain 
\begin{align}\label{u'-R1}
	u'_{R,1}(0) =& \left( \frac{\kappa^2 C_{0,0} + \kappa\lambda}{\alpha - 1} + \frac{\kappa C_{1,1}}{\alpha}\right) \frac{\log(\alpha + 1)}{2} + \frac{\kappa C_{1,0}}{2} + C_{2,1}  \nonumber \\ 
	&  - \left( C_{0,0}(\gamma - \beta) + h_0 - q_0 + \frac{\kappa^2 C_{0,0} + \kappa\lambda}{\alpha - 1} \right) \frac{\log2}{2}.
\end{align}

Next, we examine the asymptotic limit of $U_2'(z)$ as $z \to -\infty$.  
From \qref{I-1}, we have
\begin{align}\label{I1-left}
	I_1 (z) = \frac{1}{2} \log \left(\frac{\alpha}{\alpha + 1}\right) + o(1) \quad \text{as } z \to -\infty.
\end{align}
Since $z + ze^{2z} = z + o(1)$, $1 + e^{2z} = 1 + o(1)$, and $\alpha + e^{2z} = \alpha + o(z^{-10})$ as $z \to -\infty$, 
applying Lemma~\ref{rational-lemma}, we get
\begin{align}
	\label{z/D-left} \frac{z}{D(z)} &= \frac{z + ze^{2z}}{\alpha + e^{2z}} = \frac{z}{\alpha} + o(1) \quad \text{as } z \to -\infty, \\
	\label{1/D-left} \frac{1}{D(z)} &= \frac{1 + e^{2z}}{\alpha + e^{2z}} = \frac{1}{\alpha} + o(1) \quad \text{as } z \to -\infty.
\end{align}
On the other hand,
\begin{align}
	\frac{I_2(z)}{D(z)} &= \frac{(\log(1 + e^{2z}) - \log2) (1 + e^{2z})}{2(\alpha + e^{2z})}.
\end{align}
For $z < 0$ with $|z|$ sufficiently large, applying \qref{Taylor-series-1} with $y = e^{2z}$, we obtain
\begin{align}\label{log1-expan-left}
	\log(1 + e^{2z})= e^{2z} - \frac{1}{2} (e^{2z})^2 + \frac{1}{3} (e^{2z})^3 - \frac{1}{4} (e^{2z})^4 + \cdots  
\end{align}
Hence,
\begin{align}
	&(\log(1 + e^{2z}) - \log2) (1 + e^{2z})  \nonumber  \\
	&= \log(1 + e^{2z}) - \log2 + e^{2z}\log(1 + e^{2z}) - e^{2z}\log2  \nonumber  \\
	&= \left( e^{2z} - \frac{1}{2} (e^{2z})^2 + \cdots  \right) - \log2 + e^{2z}\left( e^{2z} - \frac{1}{2} (e^{2z})^2 + \cdots  \right) - e^{2z}\log2, 
\end{align}
which implies that
\begin{align}
	(\log(1 + e^{2z}) - \log2) (1 + e^{-2z}) 
	= - \log2  + o(1) \quad \text{as } z \to -\infty.
\end{align}
Applying Lemma~\ref{rational-lemma} again, we get
\begin{align}\label{I2/D-left}
	\frac{I_2(z)}{D(z)} = \frac{-\log2}{2\alpha}  + o(1) \quad \text{as } z \to -\infty.
\end{align}
Substituting \qref{I1-left}, \qref{z/D-left}, \qref{1/D-left} and \qref{I2/D-left} into \qref{U'_2-b}, and simplifying the resulting expression, we obtain
\begin{align}
	U_2'(z) =& \; \frac{C_{0,0} \beta - h_0}{\alpha}z 
	+ \left( -\frac{\kappa^2 C_{0,0} + \kappa\lambda}{\alpha - 1} - \frac{\kappa C_{1,1}}{\alpha}\right) \frac{1}{2} \log\left(\frac{\alpha}{\alpha + 1}\right) - \frac{\kappa C_{1,0}}{2\alpha}  \nonumber \\ 
	& + \frac{C_{2,1}}{\alpha} - \left( C_{0,0}(\gamma - \beta) + h_0 - q_0 + \frac{\kappa^2 C_{0,0} + \kappa\lambda}{\alpha - 1} \right) \frac{\log2}{2\alpha}  \nonumber  \\
    & + o(1) \quad \text{as } z \to \infty.
\end{align}
Applying Lemma~\ref{integral-lemma}, we get
\begin{align}
	U_2(z) =& \; \frac{C_{0,0} \beta - h_0}{2\alpha} z^2 
	+ \biggl[ \left( -\frac{\kappa^2 C_{0,0} + \kappa\lambda}{\alpha - 1} - \frac{\kappa C_{1,1}}{\alpha}\right) \frac{1}{2} \log\left(\frac{\alpha}{\alpha + 1}\right) - \frac{\kappa C_{1,0}}{2\alpha}  \nonumber \\ 
	& + \frac{C_{2,1}}{\alpha} - \left( C_{0,0}(\gamma - \beta) + h_0 - q_0 + \frac{\kappa^2 C_{0,0} + \kappa\lambda}{\alpha - 1} \right) \frac{\log2}{2\alpha} \biggr]z  \nonumber \\ 
	& + o(z) \quad \text{as } z \to -\infty.
\end{align}
Then, using the matching condition
\begin{align}
	\label{match-U2-L} U_2(z) + o(1) &= u_{L,2}(0) + u'_{L,1}(0)z + \frac{1}{2} u''_{L,0}(0)z^2 \quad \text{as } z \to \infty,
\end{align}
we obtain 
\begin{align}\label{u'-L1}
	u'_{L,1}(0) =& \left( -\frac{\kappa^2 C_{0,0} + \kappa\lambda}{\alpha - 1} - \frac{\kappa C_{1,1}}{\alpha}\right) \frac{1}{2} \log\left(\frac{\alpha}{\alpha + 1}\right) - \frac{\kappa C_{1,0}}{2\alpha} + \frac{C_{2,1}}{\alpha}  \nonumber \\ 
	& - \left( C_{0,0}(\gamma - \beta) + h_0 - q_0 + \frac{\kappa^2 C_{0,0} + \kappa\lambda}{\alpha - 1} \right) \frac{\log2}{2\alpha}.
\end{align}

From \qref{u'-R1} and \qref{u'-L1}, we get
\begin{align}\label{u'R1(0)-au'R1(0)-a}
	&u'_{R,1}(0) - \alpha u'_{L,1}(0)   \nonumber  \\
    &= \left( \frac{\kappa^2 C_{0,0} + \kappa\lambda}{\alpha - 1} + \frac{\kappa C_{1,1}}{\alpha}\right) \frac{1}{2} [\alpha \log\alpha + (1 - \alpha) \log(\alpha + 1)] + \kappa C_{1,0}.
\end{align}
Using \qref{u'R0(0)-au'R0(0)} and \qref{uL1(0)-uR1(0)}, and recalling that $C_{1,1} = \alpha u'_{L,0}(0)$, 
we obtain
\begin{align}\label{u'R1(0)-au'R1(0)-b}
	\frac{1}{2} \left( \frac{\kappa^2 C_{0,0} + \kappa\lambda}{\alpha - 1} + \frac{\kappa C_{1,1}}{\alpha}\right) 
	&= \frac{\kappa}{\alpha - 1} \left( \frac{\kappa C_{0,0} + \lambda}{2} + C_{1,1}\frac{\alpha - 1}{2\alpha} \right)   \nonumber \\ 
	&= \frac{\kappa}{\alpha - 1} \frac{u_{L,1}(0) - u_{R,1}(0)}{\log\alpha}.
\end{align}
Moreover, from \qref{uR1(0)} and \qref{uL1(0)-uR1(0)}, we get
\begin{align}\label{C1,0}
	C_{1,0} = u_{R,1}(0) + [u_{L,1}(0) - u_{R,1}(0)] \frac{\log(\alpha + 1)}{\log\alpha}.
\end{align}
Substituting \qref{u'R1(0)-au'R1(0)-b} and \qref{C1,0} into \qref{u'R1(0)-au'R1(0)-a}, and simplifying the resulting expression, we obtain
\begin{align}\label{u'R1(0)-au'R1(0)}
	u'_{R,1}(0) - \alpha u'_{L,1}(0) = \frac{\kappa}{\alpha - 1} [u_{R,1}(0) - \alpha u_{L,1}(0)].
\end{align}
From \qref{outer-u-Lj}, \qref{outer-u-Rj}, \qref{uL1(0)-uR1(0)-b} and \qref{u'R1(0)-au'R1(0)}, we see that: 
When $\alpha \neq 1$, $u_{R,1}(x)$ and $u_{L,1}(x)$ are determined by the following two-sided problem
\begin{align}
	\label{u1RL-case1-eqn1} -u_{R,1}'' + \gamma u_{R,1} &= 0, \quad \text{in } \Omega_R, \\
	-\alpha u_{L,1}'' + \beta u_{L,1} &= 0, \quad \text{in } \Omega_L, \\
        u_{L,1}(0) - u_{R,1}(0) &= \frac{\log\alpha}{2}[u_{R,0}' (0) - u_{L,0}' (0)] \\
	u'_{R,1}(0) - \alpha u'_{L,1}(0) &= \frac{\kappa}{\alpha - 1} [u_{R,1}(0) - \alpha u_{L,1}(0)], \\
	\label{u1RL-case1-eqn2} u_{L,1}'(-1) &= 0 = u_{R,1}'(1).
\end{align}
This shows that, when $\alpha \neq 1$, the diffuse domain problem is first-order accurate, 
as the solution pair $(u_{R,1}, u_{L,1})$ of the problem \qref{u1RL-case1-eqn1}--\qref{u1RL-case1-eqn2} is generally not equal to $(0,0)$.

\

\noindent $\bullet$ \textbf{Case 2: $\alpha = 1$.} 

In this case, $D(z) \equiv 1$ and $I_1(z) = I_2(z)$. Hence, Equation~\qref{D-U'2} becomes
    \begin{align}
    \label{U'2-case2-a}
U_2'(z) & = C_{0,0}[\beta z + (\gamma - \beta)I_2(z)] - h_0 z + (h_0 - q_0)I_2(z) 
    \nonumber 
    \\
& \quad + (\kappa^2 C_{0,0} +\kappa\lambda) \int_0^z I_2(s) \phi'(s)\;ds + \kappa C_{1,1} \int_0^z \phi'(s)s \;ds   
    \nonumber 
    \\
& \quad + \kappa C_{1,0} \phi(z) - \frac{\kappa C_{1,0}}{2} + C_{2,1}. 
    \end{align}
Using integration by parts, we get
    \begin{align}
    \label{I2-phi'}
\int_0^z I_2(s)\phi'(s) \;ds & = I_2(s)\phi(s) \bigg\rvert_0^z - \int_0^z \phi(s) I'_2(s) \;ds 
    \nonumber 
    \\
& = I_2(z)\phi(z) - \int_0^z (\phi(s))^2 \;ds 
    \nonumber 
    \\
& = I_2(z)\phi(z) - \int_0^z \frac{e^{2s}}{(e^s + e^{-s})^2}\; ds  
    \nonumber
    \\
& = I_2(z)\phi(z) - \frac{1}{2} \log(e^{2z} + 1) - \frac{1}{2(e^{2z} + 1)} + \frac{\log2}{2} + \frac{1}{4}.
    \end{align}
Substituting \qref{s-phi's} and \qref{I2-phi'} into \qref{U'2-case2-a}, and simplifying the resulting expression, we get
    \begin{align}
    \label{U'2-case2}
U_2'(z) & = (C_{0,0}\beta - h_0) z + [C_{0,0}(\gamma - \beta) + h_0 - q_0 - \kappa C_{1,1}]I_2(z) 
    \nonumber 
    \\
& \quad + (\kappa^2 C_{0,0} +\kappa\lambda)\left( I_2(z)\phi(z) - \frac{1}{2} \log(e^{2z} + 1) - \frac{1}{2(e^{2z} + 1)} + \frac{\log2}{2} + \frac{1}{4} \right) 
    \nonumber 
    \\
& \quad  + \kappa C_{1,1} \phi(z)z + \kappa C_{1,0} \phi(z) - \frac{\kappa C_{1,0}}{2} + C_{2,1}. 
    \end{align}

Again, we examine the asymptotic limit of $U_2'(z)$ as $z \to \infty$ first.  
From \qref{log1-expan-right}, we have
\begin{align}\label{log1-right}
	\log(e^{2z} + 1) = z + o(1) \quad \text{as } z \to \infty.
\end{align}
Using \qref{I-1} with $\alpha = 1$, we get 
\begin{align}\label{I2-right}
	I_2(z) = z - \frac{\log2}{2} + o(1) \quad \text{as } z \to \infty.
\end{align}
Combining the equation above with $1 + e^{-2z} = 1 + o(z^{-10})$ as $z \to \infty$, 
then applying Lemma~\ref{rational-lemma}, we get
\begin{align}\label{I2-phi-right}
	I_2(z)\phi(z) = \frac{I_2(z)}{1 + e^{-2z}} = z - \frac{\log2}{2} + o(1) \quad \text{as } z \to \infty.
\end{align}
Similarly, we obtain
\begin{align}
	\label{phi-right} \phi(z) &= \frac{1}{1 + e^{-2z}} = 1 + o(1) \quad \text{as } z \to \infty, \\
	\label{z-phi-right} \phi(z)z &= \frac{z}{1 + e^{-2z}} = z + o(1) \quad \text{as } z \to \infty, \\
	\label{1/exp-right} \frac{1}{e^{2z} + 1} &= \frac{e^{-2z}}{1 + e^{-2z}} = 0 + o(1) \quad \text{as } z \to \infty.
\end{align}
Substituting \qref{log1-right}, \qref{I2-right}, \qref{I2-phi-right}, \qref{phi-right}, \qref{z-phi-right} and \qref{1/exp-right} into \qref{U'2-case2}, and simplifying the resulting expression, we get
    \begin{align}
U'_2(z) & = (C_{0,0}\gamma - q_0)z - [C_{0,0}(\gamma - \beta) + h_0 - q_0 - \kappa C_{1,1}]\frac{\log2}{2} + \kappa C_{1,0} 
    \nonumber 
    \\
& \quad  + \frac{1}{4}(\kappa^2 C_{0,0} + \kappa\lambda) - \frac{\kappa C_{0,0}}{2} + C_{2,1} + o(1) \quad \text{as } z \to \infty.
    \end{align}
Therefore, applying Lemma~\ref{integral-lemma}, we obtain
    \begin{align}
U_2(z) & = \frac{1}{2}(C_{0,0}\gamma - q_0)z^2 + \biggl[ -[C_{0,0}(\gamma - \beta) + h_0 - q_0 - \kappa C_{1,1}]\frac{\log2}{2} + \kappa C_{1,0} 
    \nonumber 
    \\
& \quad  + \frac{1}{4}(\kappa^2 C_{0,0} + \kappa\lambda) - \frac{\kappa C_{0,0}}{2} + C_{2,1} \biggr] z + o(z) \quad \text{as } z \to \infty.
    \end{align}
Then, using the matching condition
    \begin{align}
U_2(z) + o(1) & = u_{R,2}(0) + u'_{R,1}(0)z + \frac{1}{2} u''_{R,0}(0)z^2 \quad \text{as } z \to \infty,
    \end{align}
we get 
    \begin{align}
    \label{u'-R1-case2}
u'_{R,1}(0)  =& -[C_{0,0}(\gamma - \beta) + h_0 - q_0 - \kappa C_{1,1}]\frac{\log2}{2} + \kappa C_{1,0}  \nonumber  \\
    & + \frac{1}{4}(\kappa^2 C_{0,0} + \kappa\lambda) - \frac{\kappa C_{0,0}}{2} + C_{2,1}.
    \end{align}

Next, we examine the asymptotic limit of $U_2'(z)$ as $z \to -\infty$.  
From \qref{log1-expan-left}, we have
\begin{align}\label{log1-left}
	\log(e^{2z} + 1) = 0 + o(1) \quad \text{as } z \to -\infty.
\end{align}
Using \qref{I-1} with $\alpha = 1$, we get 
\begin{align}\label{I2-left}
	I_2(z) = - \frac{\log2}{2} + o(1) \quad \text{as } z \to -\infty.
\end{align}
From \qref{I-2}, we have
\begin{align}
	I_2(z)\phi(z) = \frac{[\log(e^{2z} + 1) - \log2]e^{2z}}{2(e^{2z} + 1)}.
\end{align}
Using \qref{log1-expan-left} for $z < 0$ with $|z|$ sufficiently large, we get
\begin{align*}
	[\log(e^{2z} + 1) - \log2]e^{2z} = -(\log2)e^{2z} + (e^{2z})^2 - \frac{1}{2}(e^{2z})^3 + \frac{1}{3}(e^{2z})^4 - \cdots ,
\end{align*}
which implies that
\begin{align}
	[\log(e^{2z} + 1) - \log2]e^{2z} = 0 + o(1) \quad \text{as } z \to -\infty.
\end{align}
Combining the equation above with $1 + e^{2z} = 0 + o(z^{-10})$ as $z \to -\infty$, 
then applying Lemma~\ref{rational-lemma}, we get
\begin{align}\label{I2-phi-left}
	I_2(z)\phi(z) = 0 + o(1) \quad \text{as } z \to -\infty.
\end{align}
Similarly, we obtain
\begin{align}
	\label{phi-left} \phi(z) = \frac{e^{2z}}{e^{2z} + 1} &= 0 + o(1) \quad \text{as } z \to -\infty, \\
	\label{z-phi-left} \phi(z)z = \frac{z e^{2z}}{e^{2z} + 1}  &= 0 + o(1) \quad \text{as } z \to -\infty, \\
	\label{1/exp-left} \frac{1}{e^{2z} + 1} &= 1 + o(1) \quad \text{as } z \to -\infty.
\end{align}
Substituting \qref{log1-left}, \qref{I2-left}, \qref{I2-phi-left}, \qref{phi-left}, \qref{z-phi-left} and \qref{1/exp-left} into \qref{U'2-case2}, and simplifying the resulting expression, we get
\begin{align}
	U'_2(z) =& \; (C_{0,0}\beta - h_0)z - [C_{0,0}(\gamma - \beta) + h_0 - q_0 - \kappa C_{1,1}]\frac{\log2}{2} \nonumber \\
	& + \left( \frac{\log2}{2} - \frac{1}{4} \right) (\kappa^2 C_{0,0} + \kappa\lambda) - \frac{\kappa C_{0,0}}{2} + C_{2,1}
	+ o(1) \quad \text{as } z \to -\infty.
\end{align}
Therefore, applying Lemma~\ref{integral-lemma}, we obtain
\begin{align}
	U_2(z) =& \; \frac{1}{2}(C_{0,0}\beta - h_0)z^2 + \biggl[ -[C_{0,0}(\gamma - \beta) + h_0 - q_0 - \kappa C_{1,1}]\frac{\log2}{2} \nonumber \\
	& + \left( \frac{\log2}{2} - \frac{1}{4} \right) (\kappa^2 C_{0,0} + \kappa\lambda) - \frac{\kappa C_{0,0}}{2} + C_{2,1} \biggr]z 
	  \nonumber  \\
    &+ o(z) \quad \text{as } z \to -\infty.
\end{align}
Then, using the matching condition
\begin{align}
	U_2(z) + o(1) &= u_{L,2}(0) + u'_{L,1}(0)z + \frac{1}{2} u''_{L,0}(0)z^2 \quad \text{as } z \to -\infty,
\end{align}
we get 
\begin{align}\label{u'-L1-case2}
	u'_{L,1}(0) =&  -[C_{0,0}(\gamma - \beta) + h_0 - q_0 - \kappa C_{1,1}]\frac{\log2}{2}   \nonumber  \\
    & + \left( \frac{\log2}{2} - \frac{1}{4} \right) (\kappa^2 C_{0,0} + \kappa\lambda) - \frac{\kappa C_{0,0}}{2} + C_{2,1}.
\end{align}
Combining \qref{u'-R1-case2} and \qref{u'-L1-case2}, we obtain
\begin{align}\label{u'L1(0)-u'R1(0)}
	u'_{R,1}(0) - u'_{L,1}(0) = \kappa C_{1,0} + \left( \frac{1}{2} - \frac{\log2}{2} \right) (\kappa^2 C_{0,0} + \kappa\lambda).
\end{align}
Using \qref{u'R0(0)-au'R0(0)} and \qref{uR1(0)} with $\alpha = 1$, we have
\begin{align}
	\label{u'R0(0)-u'R0(0)}  \kappa u_{R,0} (0) + \lambda &= u_{R,0}' (0) - u_{L,0}' (0), \\
	\label{C1,0-case2} C_{1,0} &= u_{R,1}(0) + (\kappa C_{0,0} + \lambda)\frac{\log2}{2}.
\end{align}
Since $C_{0,0} = u_{R,0}(0)$, by substituting \qref{u'R0(0)-u'R0(0)} and \qref{C1,0-case2} into \qref{u'L1(0)-u'R1(0)}, we get
\begin{align}\label{u'L1(0)-u'R1(0)-b}
	u'_{R,1}(0) - u'_{L,1}(0) = \kappa u_{R,1}(0) + \frac{\kappa}{2}[u_{R,0}' (0) - u_{L,0}' (0)].
\end{align}
On the other hand, using \qref{uL1(0)-uR1(0)-b} with $\alpha = 1$, we have
\begin{align}
	u_{R,1}(0) - u_{L,1}(0) = 0.
\end{align}
Combining the equation above with \qref{outer-u-Lj}, \qref{outer-u-Rj} and \qref{u'L1(0)-u'R1(0)-b}, we see that: 
When $\alpha = 1$, $u_{R,1}(x)$ and $u_{L,1}(x)$ are determined by the following two-sided problem
\begin{align}
	\label{u1RL-case2-eqn1} -u_{R,1}'' + \gamma u_{R,1} &= 0, \quad \text{in } \Omega_R, \\
	-u_{L,1}'' + \beta u_{L,1} &= 0, \quad \text{in } \Omega_L, \\
	u_{R,1}(0) &= u_{L,1}(0), \\
        u'_{R,1}(0) - u'_{L,1}(0) &= \kappa u_{R,1}(0) + \frac{\kappa}{2}[u_{R,0}' (0) - u_{L,0}' (0)], \\
	\label{u1RL-case2-eqn2} u_{L,1}'(-1) &= 0 = u_{R,1}'(1).
\end{align}
This shows that, when $\alpha = 1$, the diffuse domain problem is first-order accurate, 
as the solution pair $(u_{R,1}, u_{L,1})$ of the problem \qref{u1RL-case2-eqn1}--\qref{u1RL-case2-eqn2} is generally not equal to $(0,0)$.

\section{Numerical Simulations}
To assess the order of convergence of the diffuse domain method, we generate reference solutions for the corresponding sharp interface problems and perform error analyses.

\subsection{Numerical Simulations in 1D}
For a 1D problem, we define the domains as 
\begin{align*}
    \Omega:=(-1,1), \quad \Omega_1=\Omega_R:=(0,1), \quad \Omega_2=\Omega_L:=(-1,0), 
\end{align*}
and choose the function
\begin{align}\label{true-sln-1D}
    u_0(x) = \begin{cases}	 
    u_R(x) = (4x^2-8x+6)\cos{(4\pi x)} ,&  \text{ if }  x\in\Omega_R,
    \\
    u_L(x) = 8(x+1)^2-2 ,&  \text{ if } x\in \Omega_L, 
    \end{cases}
\end{align}
which is a solution to the two-sided problem \qref{1D-bvp1}--\qref{1D-bvp5}, where
    \begin{align}
\label{data1D-1} \alpha & = 0.5, \quad \beta = \gamma = 1, \quad \kappa = 1.6, \quad \lambda = -25.6, 
    \\
q(x) & = (16\pi^2 + 1) (4x^2 - 8x + 6)\cos(4 \pi x) + 64 \pi (x - 1) \sin(4 \pi x), 	 
    \\
\label{data1D-2} h(x) & = 8(x + 1)^2 - 10.
    \end{align}
The functions $u_L(x)$ and $u_R(x)$ are continuous in $\Omega_L$ and $\Omega_R$, respectively, 
and match at $x=0$, ensuring that $u_0(x)$ is globally continuous across $\Omega$. 
Additionally, we have
\[
    u_R' (0) - \alpha u_L' (0) = -16, 
\]
indicating a non-zero jump in the flux across the interface $x = 0$. 

To numerically solve the diffuse domain approximation problem \qref{1D-diff-bvp1}--\qref{1D-diff-bvp2},  we implement a second-order cell-centered finite difference approximation with a multigrid solver~\cite{feng2018}.  The plots of the true solution $u_0$ and the approximation solution $u_\varepsilon$, for various $\varepsilon$ values, are presented in Figure~\ref{fig:Sol_WholeDomain}. We observe that, as $\varepsilon \to 0$, $u_\varepsilon$ converges to $u_0$.  To analyze the $L^2$ convergence rate, we fix the grid size of the finite difference method to $N=2.0\times 10^5$, run the solver for progressively smaller values of $\varepsilon$, and compute the discrete $L^2$ error for each case. The log-log plot of the error is shown in Figure~\ref{fig:ErrorPlot}. We compute a numerical convergence rate of $O(\varepsilon^{1.1784})$ as measured in a discrete $L^2$ norm. This is consistent with the first-order convergence rate suggested by the asymptotic analysis.


\begin{figure}[htb!]
	\centering
	\includegraphics[width=0.75\linewidth]{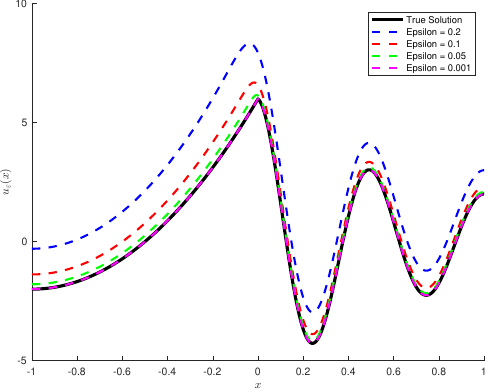}
	\caption{Plots of the true solution $u_0$ specified by \qref{true-sln-1D} and the diffuse domain approximation solution $u_\varepsilon$ over $\Omega = (-1,1)$, for various values of $\varepsilon$, with the parameters given in  \qref{data1D-1}--\qref{data1D-2}.}
	\label{fig:Sol_WholeDomain}
\end{figure}

\begin{figure}[htb!]
	\centering
	\includegraphics[width=0.8\textwidth]{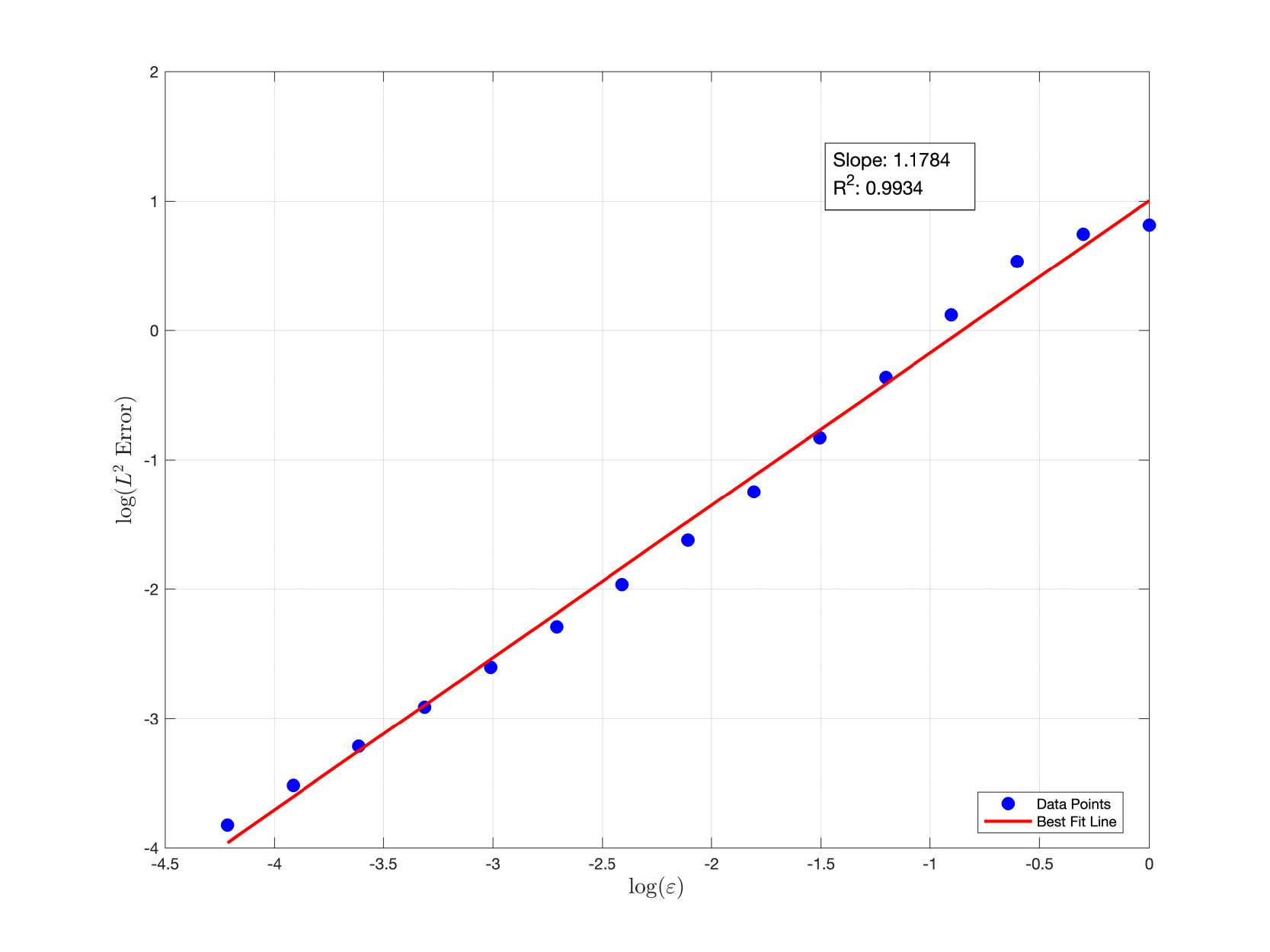}
	\caption{Log-log plot of the $L^2$ Error versus $\varepsilon$ for the 1D problem described by  \qref{data1D-1}--\qref{data1D-2}.}
	\label{fig:ErrorPlot}
\end{figure}

Next, we investigate a coupled refinement strategy where the grid size $h$ and the interface parameter $\varepsilon$ are related via $h=r\varepsilon$, $0<r<1$. Choosing $r$ sufficiently small ensures the computational grid resolves the diffuse interface region as $\varepsilon$ decreases. For this study, we consider the 1D two-sided problem defined by the PDE system \qref{1D-bvp1}--\qref{1D-bvp5}. We construct a test case with a known exact solution $u_0(x)$, given in \qref{true-sln-1D}, over the domain $\Omega = (-1, 1)$ with the interface at $x=0$. This solution corresponds to the specific parameter choices and function definitions detailed in \qref{data1D-1}--\qref{data1D-2}. We present numerical results for two specific refinement paths: $r=1/10$ (corresponding to $\varepsilon=10h$) and $r=1/20$ (corresponding to $\varepsilon=20h$). Figure~\ref{fig:adaptive_10} displays the $L^2$ and $L^\infty$ error norms $\|u_\varepsilon - u_0\|$ versus $\varepsilon$ for the $\varepsilon=10h$ case, while Figure~\ref{fig:adaptive_20} shows the corresponding results for $\varepsilon=20h$. In both scenarios, the observed convergence rate is clearly first-order ($O(\varepsilon)$), consistent with theoretical expectations and demonstrating the stability and accuracy of the diffuse domain method with these refinement paths when compared against the exact solution.

\begin{figure}[htbp]
    \centering
    \begin{subfigure}[b]{0.49\textwidth}
        \centering
        \includegraphics[width=\textwidth]{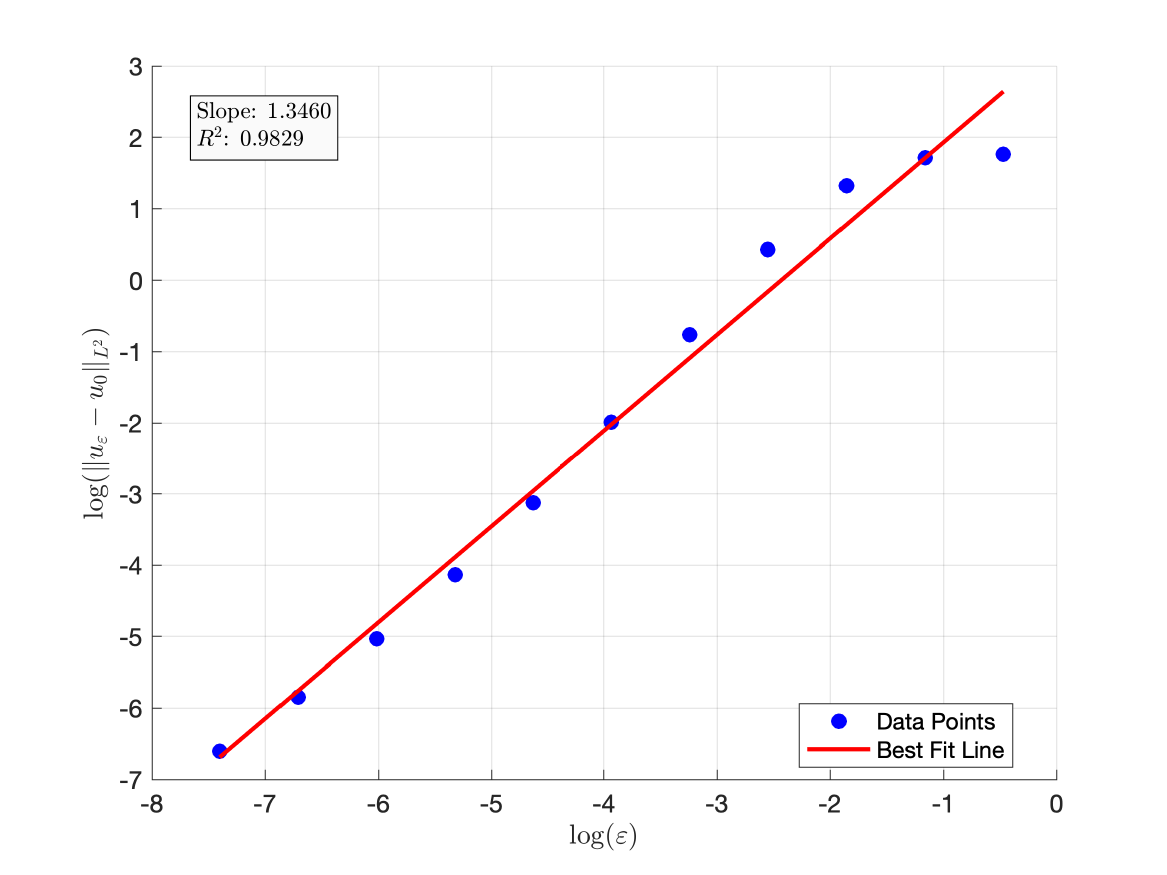}
        \caption{$L^2$ Error for $\varepsilon=10h$.} 
        \label{fig:subfig_adaptive_10_L2}
    \end{subfigure}
    \hfill 
    \begin{subfigure}[b]{0.49\textwidth}
        \centering
        \includegraphics[width=\textwidth]{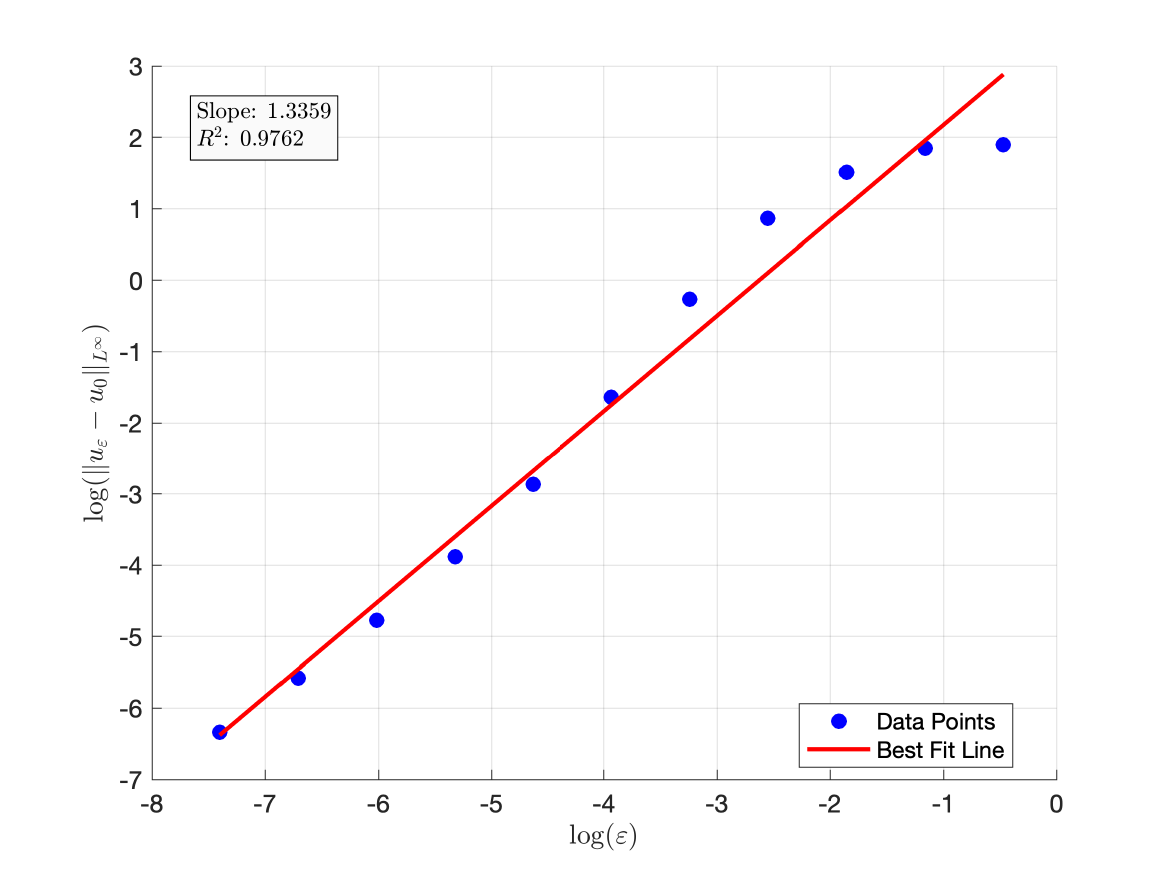}
        \caption{$L^\infty$ Error for $\varepsilon=10h$.} 
        \label{fig:subfig_adaptive_10_Linf}
    \end{subfigure}
    \caption{Log-log plot of errors versus $\varepsilon$ using the $\varepsilon=10h$ refinement path. First-order convergence is observed.} 
    \label{fig:adaptive_10}
\end{figure}

\begin{figure}[htbp]
    \centering
    \begin{subfigure}[b]{0.49\textwidth}
        \centering
        \includegraphics[width=\textwidth]{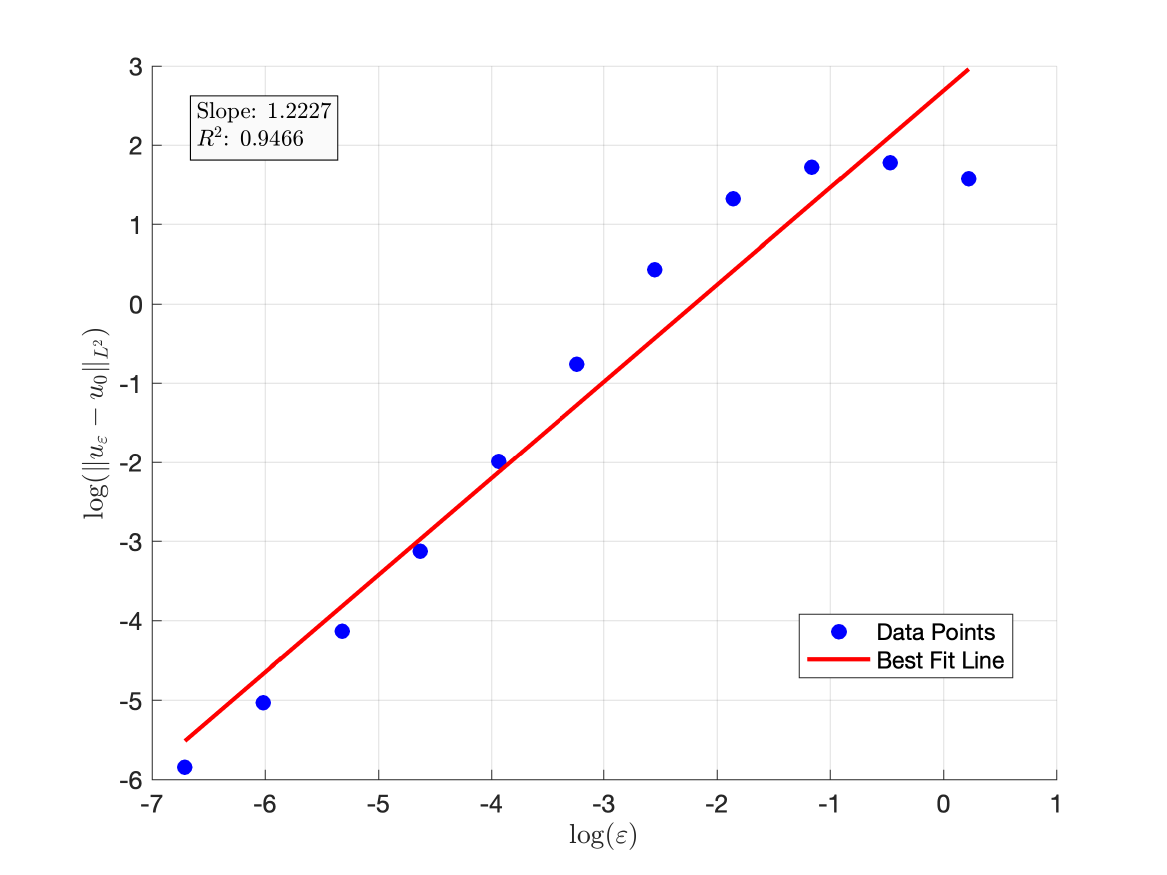} 
        \caption{$L^2$ Error for $\varepsilon=20h$.} 
        \label{fig:subfig_adaptive_20_L2}
    \end{subfigure}
    \hfill 
    \begin{subfigure}[b]{0.49\textwidth}
        \centering
        \includegraphics[width=\textwidth]{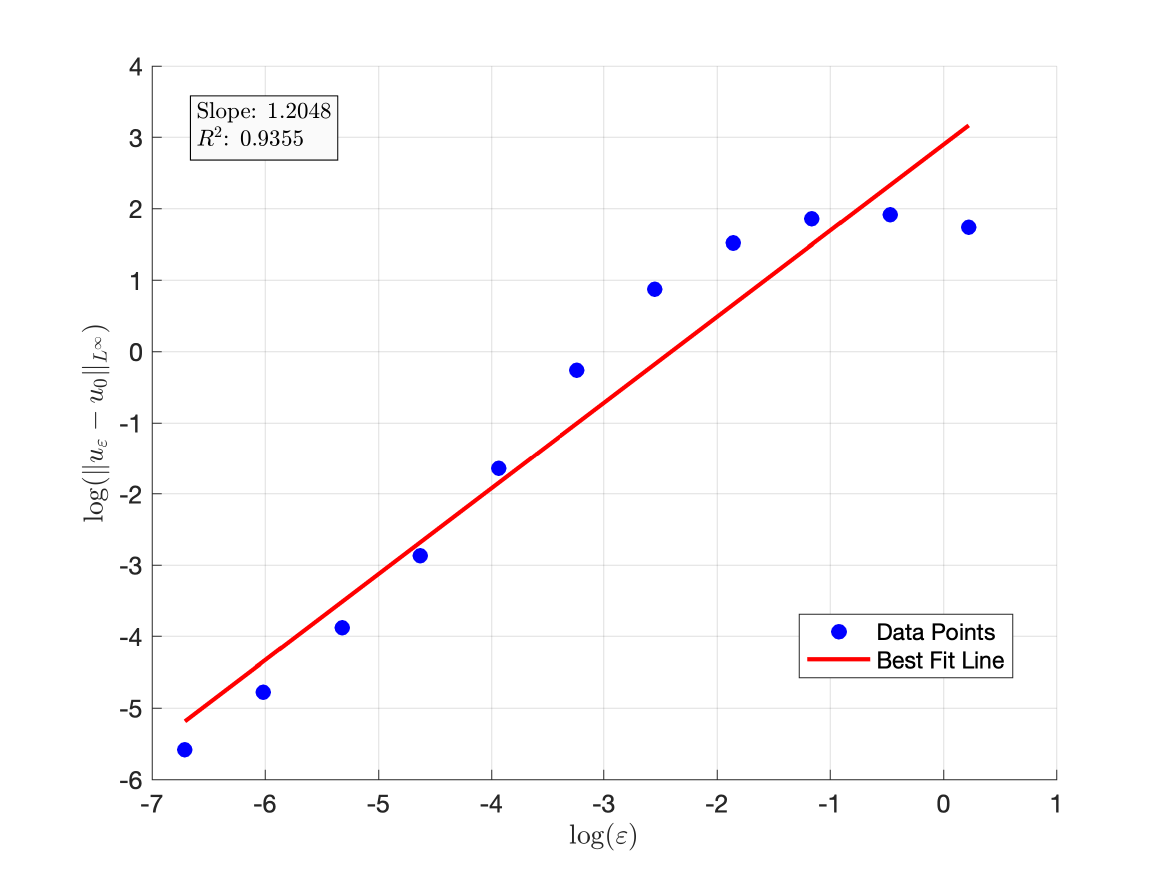} 
        \caption{$L^\infty$ Error for $\varepsilon=20h$.} 
        \label{fig:subfig_adaptive_20_Linf}
    \end{subfigure}
    \caption{Log-log plot of errors versus $\varepsilon$ using the $\varepsilon=20h$ refinement path. First-order convergence is maintained.} 
    \label{fig:adaptive_20}
\end{figure}


\subsection{Numerical Simulations in 2D}
To analyze the algorithm for a 2D problem, we define the domains as
    \begin{align*}
\Omega_1 &:=\left\{(x_1, x_2) : x_1^2 + 4x_2^2< 4\right\}, \\
\Omega &:= (-3,3) \times (-3,3), \quad \Omega_2 := \Omega \setminus \overline{\Omega_1},
    \end{align*}
and choose the function
\begin{align}\label{true-sln-2D}
	u_0(x_1, x_2) = \begin{cases}	 
		u_1(x_1, x_2)=-x_1^2 - 4x_2^2 + 6 ,&  \text{ if }  (x_1, x_2) \in\Omega_1,
		\\
		u_2(x_1, x_2) = 2 ,&  \text{ if } (x_1, x_2) \in \Omega_2, 
	\end{cases}
\end{align}
which solves the two-sided problem \qref{bvp1}--\qref{bvp5}, with the parameters
    \begin{align}
\label{data2D-1} \alpha & = 0.5, \quad \beta =  \gamma =  \kappa = 1,
    \\
q(x_1, x_2) & = -x_1^2 - 4x_2^2 + 16,
    \quad
h(x_1, x_2)  =2,
	\\
\label{data2D-2} g(x_1, x_2) &= x_1^2 + 4x_2^2 - 6 + 2\sqrt{x_1^2 + 16x_2^2}.
    \end{align}
The plot of $u_0$ is presented in Figure \ref{fig:2DTrueSolutionAngle1}. 
The functions $u_1$ and $u_2$ are smooth in $\Omega_1$ and $\Omega_2$, respectively,  and match on the boundary $\partial\Omega_1$ of $\Omega_1$,  ensuring that $u_0$ is globally continuous across $\Omega$.  Furthermore, we have
\begin{align*}
	 - \nabla (u_1 - \alpha u_2) \cdot \boldsymbol{n}_1  = 2\sqrt{x_1^2 + 16x_2^2},
\end{align*}
which indicates a non-zero jump in the flux accross the interface $\partial\Omega_1$.

To numerically solve the diffuse domain approximation problem \qref{diff-bvp1}--\qref{diff-bvp2} in 2D, first, we need to compute the signed distance function $r(x_1,x_2)$ from $(x_1,x_2) \in \Omega$ to $\partial\Omega_1$ using a numerical approach. We use a fast-marching algorithm~\cite{sethian1996} to solve the following Eikonal equation in 2D:
    \begin{align*}
\left\lvert\nabla r\right\rvert = 1  \text{ in } \Omega,  
    \quad 
r = 0  \text{ on } \partial\Omega_1.
    \end{align*}
Figure \ref{fig:2DDomain} illustrates the plot of the interface $\partial\Omega_1$ alongside the level curves of the signed distance function $r(x_1,x_2)$.

We use a second-order, cell-centered 2D finite difference method to approximate the DDM approximation, and we employ an efficient linear multigrid method to solve the resulting system of linear equations. The details for both the discretization and the multigrid solver are similar to those in~\cite{feng2018}; we suppress the details for the sake of brevity. The plot of $u_\varepsilon$, for $\varepsilon = 0.05$, is presented in Figure~\ref{fig:2DNumericalSolutionAngle1}.

To analyze the $L^2$ convergence rate, we fix the grid size of the finite difference method to $N=1.0\times 10^6$, run the solver for progressively smaller values of $\varepsilon$, and compute the discrete $L^2$ error for each case. The log-log plot of the error is shown in Figure~\ref{fig:2DErrorPlot}. We compute a numerical convergence rate of $O(\varepsilon^{1.1982})$ as measured in a discrete $L^2$ norm. This is consistent with the first-order convergence rate suggested by the asymptotic analysis. 

Finally, to test the robustness of the algorithm for a complex interface $\partial\Omega_1$, we consider the following ``starfish'' problem, 
which is the problem \qref{diff-bvp1}--\qref{diff-bvp2} over the following domains:
    \begin{align}
\label{star-domain} \Omega_1 &:=\left\{(r,\theta):r< 0.9(1.2+0.7\sin(5\theta)), \; 0\leq\theta<2\pi\right\},  
    \\
\Omega &:= (-2,2) \times (-2,2), \quad \Omega_2 := \Omega \setminus \overline{\Omega_1},	
    \end{align}
with the parameters
 \begin{align}
\label{starfish-1} \alpha & = 3, \quad \beta = 2, \quad \gamma = 1, \quad  \kappa = 0.01,
    \\
q(x_1, x_2) & = -x_1^2 + 15,
    \quad
h(x_1, x_2)  = 2.5\sin(x_1) + e^{\cos(x_2)},
	\\
\label{starfish-2} g(x_1, x_2) &= 4.
\end{align}
Here, $(r,\theta)$ are polar coordinates, determined by
\begin{align*}
	x_1 = r\cos\theta, \quad	x_2 = r\sin\theta.
\end{align*}
The plot of the interface $\partial\Omega_1$ and the approximation solution $u_\varepsilon$ to the ``starfish'' problem, computed for $\varepsilon = 0.05$, are presented in Figures~\ref{fig:2DFunDomain} and \ref{fig:2DFunDomainPlot}, respectively.


\begin{figure}[htb!]
    \centering
    \includegraphics[width=0.8\textwidth]{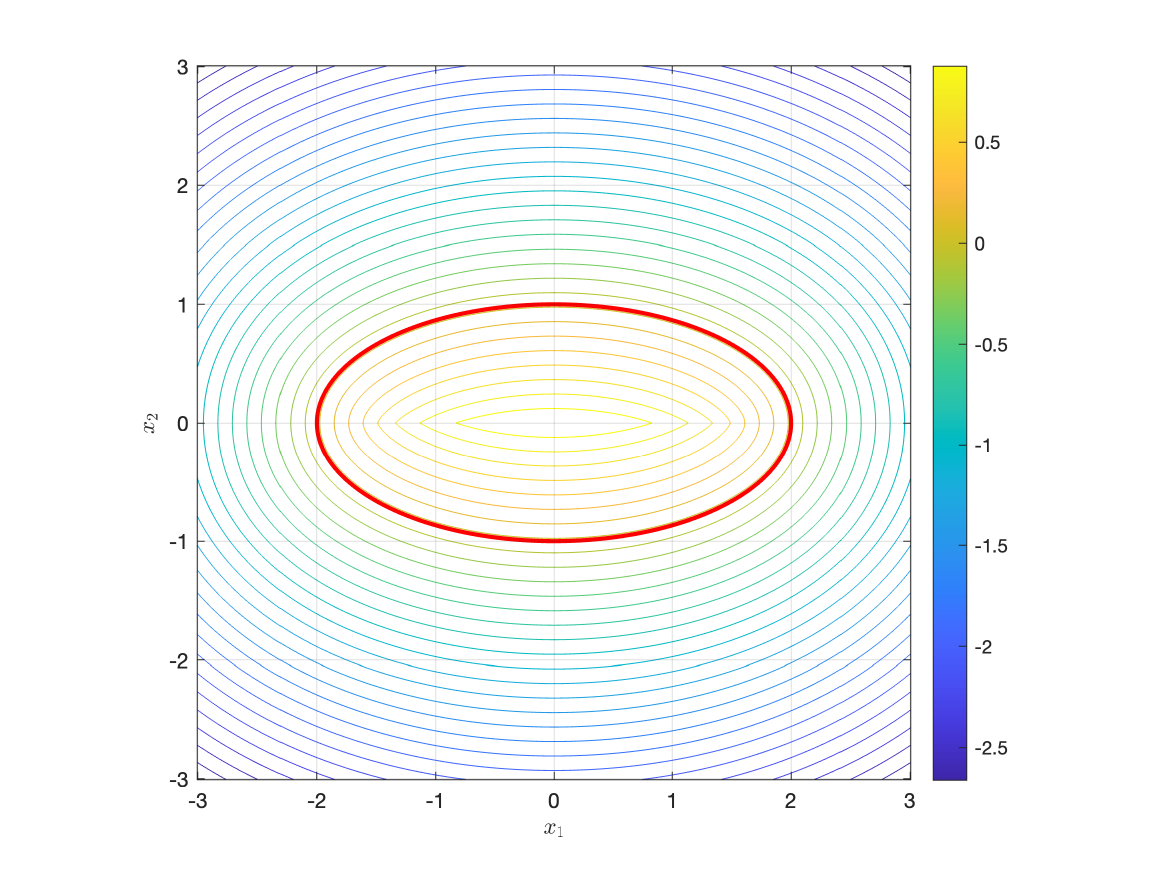}
    \caption{Plot of the interface $\partial\Omega_1 =\left\{(x_1, x_2) : x_1^2 + 4x_2^2= 4\right\}$ and the level curves of the signed distance function $r(x_1,x_2)$ in 2D.}
    \label{fig:2DDomain}
\end{figure}

\begin{figure}[htb!]
    \centering
    \includegraphics[width=0.8\textwidth]{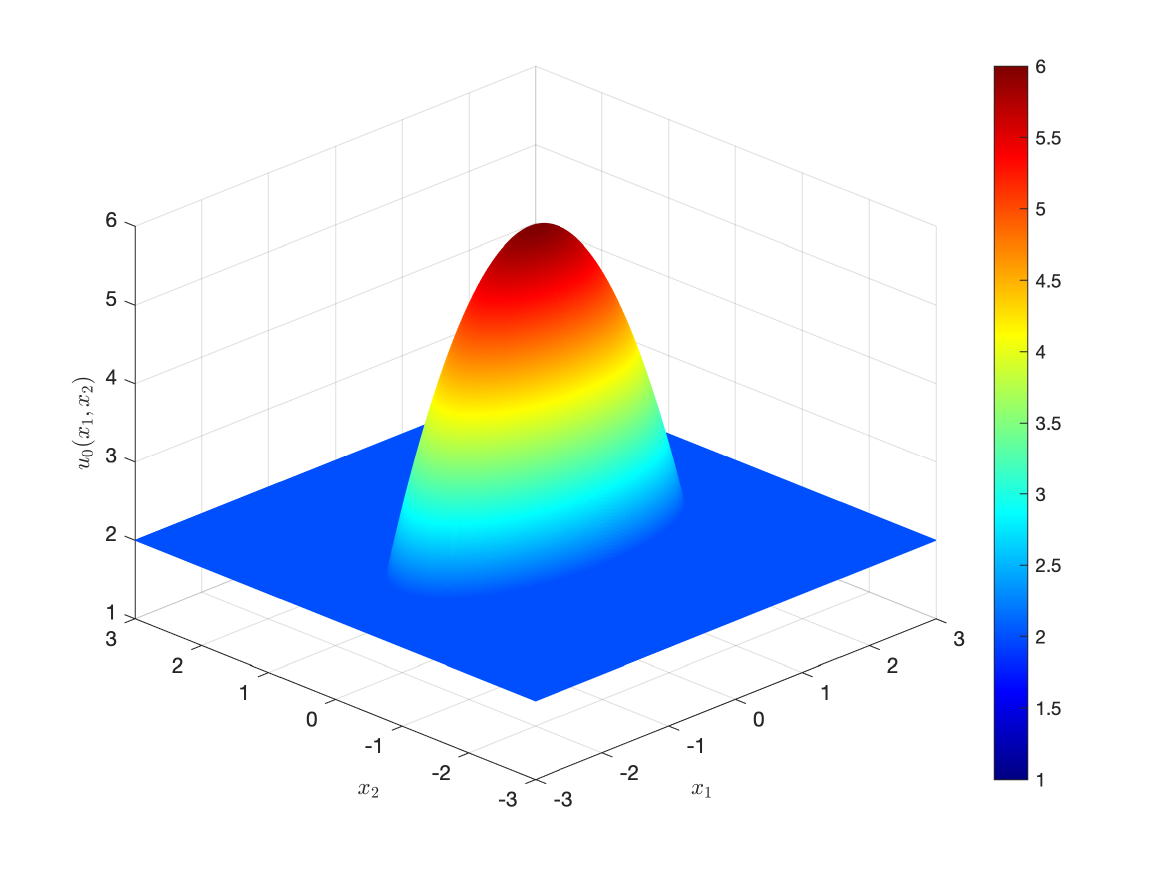}
    \caption{Plot of the true solution $u_0$ specified by \qref{true-sln-2D}.}
    \label{fig:2DTrueSolutionAngle1}
\end{figure}

\begin{figure}[htb!]
    \centering
    \includegraphics[width=0.8\textwidth]{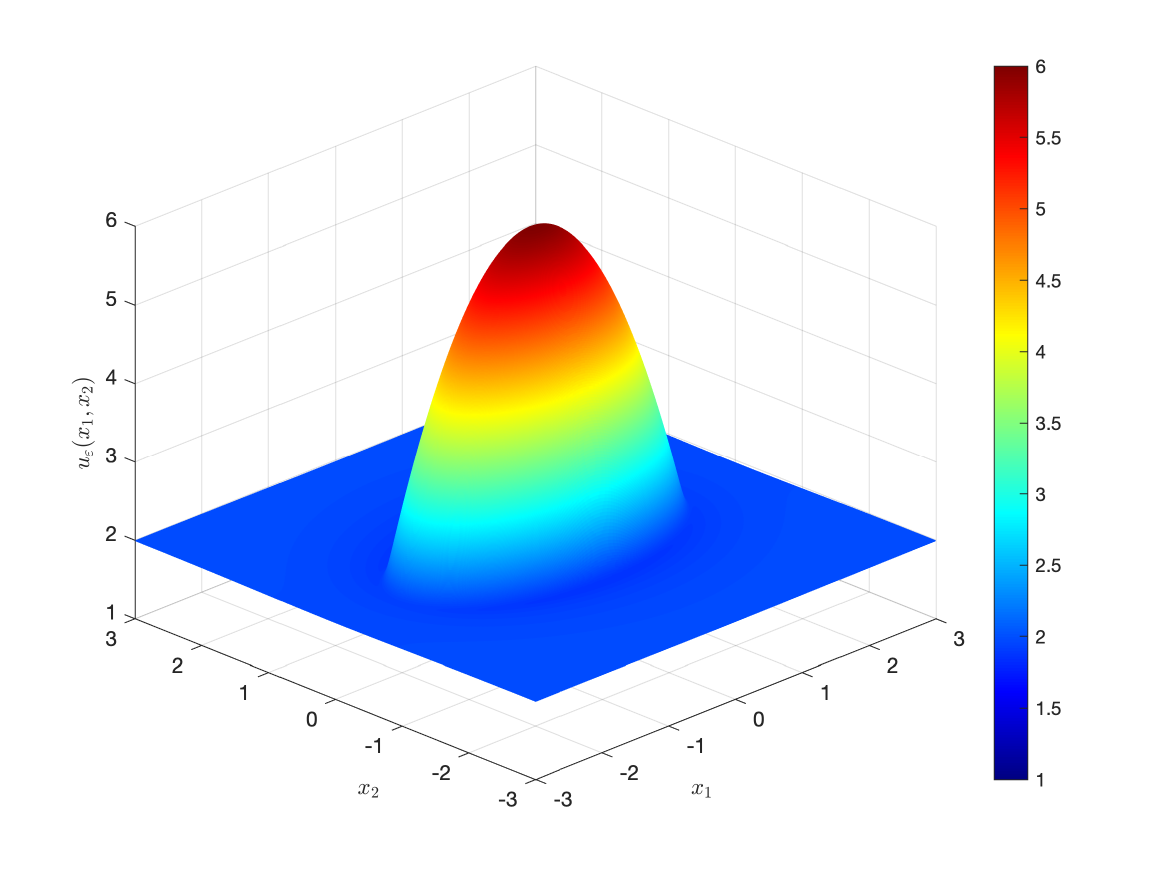}
    \caption{Plot of the diffuse domain approximation solution $u_\varepsilon$, for $\varepsilon=0.05$, with the parameters given in  \qref{data2D-1}--\qref{data2D-2}.}
    \label{fig:2DNumericalSolutionAngle1}
\end{figure}

\begin{figure}[htb!]
    \centering
    \includegraphics[width=0.75\textwidth]{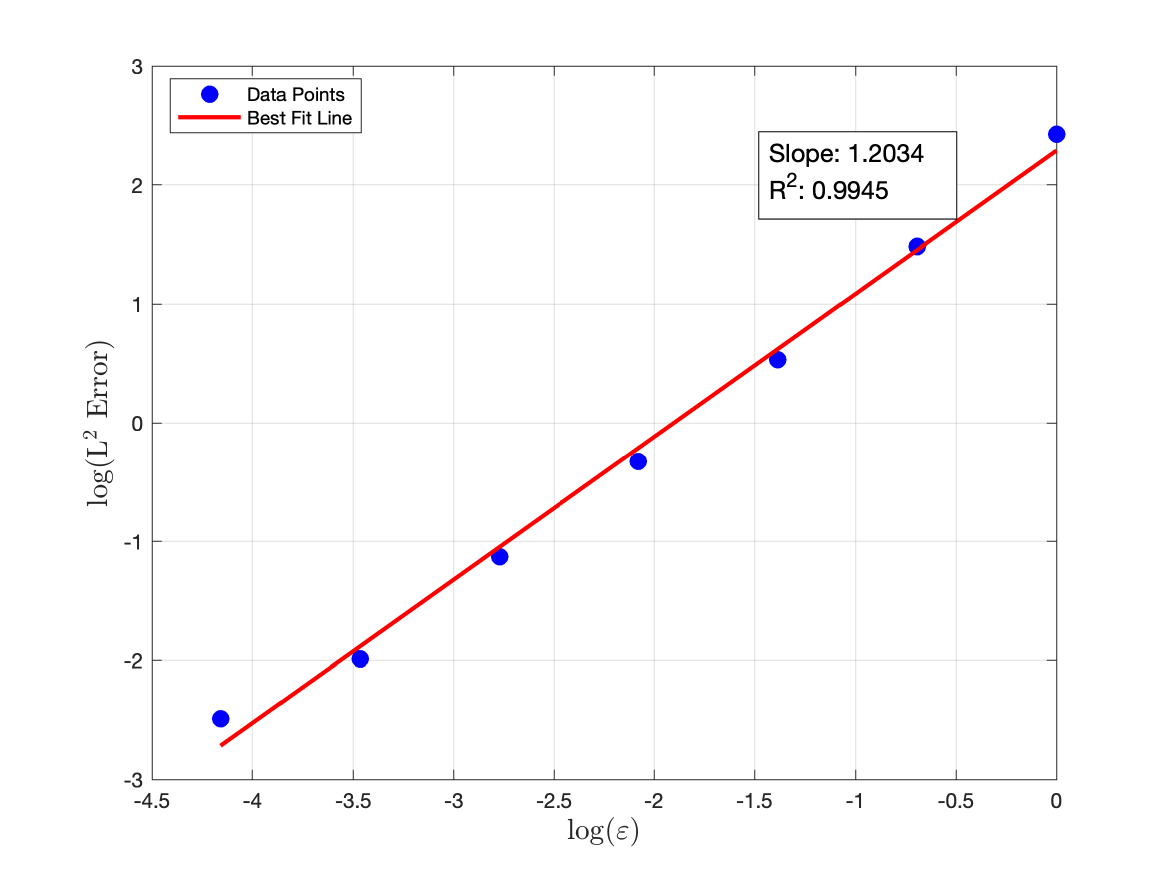}
    \caption{Log-log plot of the $L^2$ Error versus $\varepsilon$ for the 2D problem described by  \qref{data2D-1}--\qref{data2D-2}.}
    \label{fig:2DErrorPlot}
\end{figure}

\begin{figure}[htb!]
    \centering
    \includegraphics[width=0.8\textwidth]{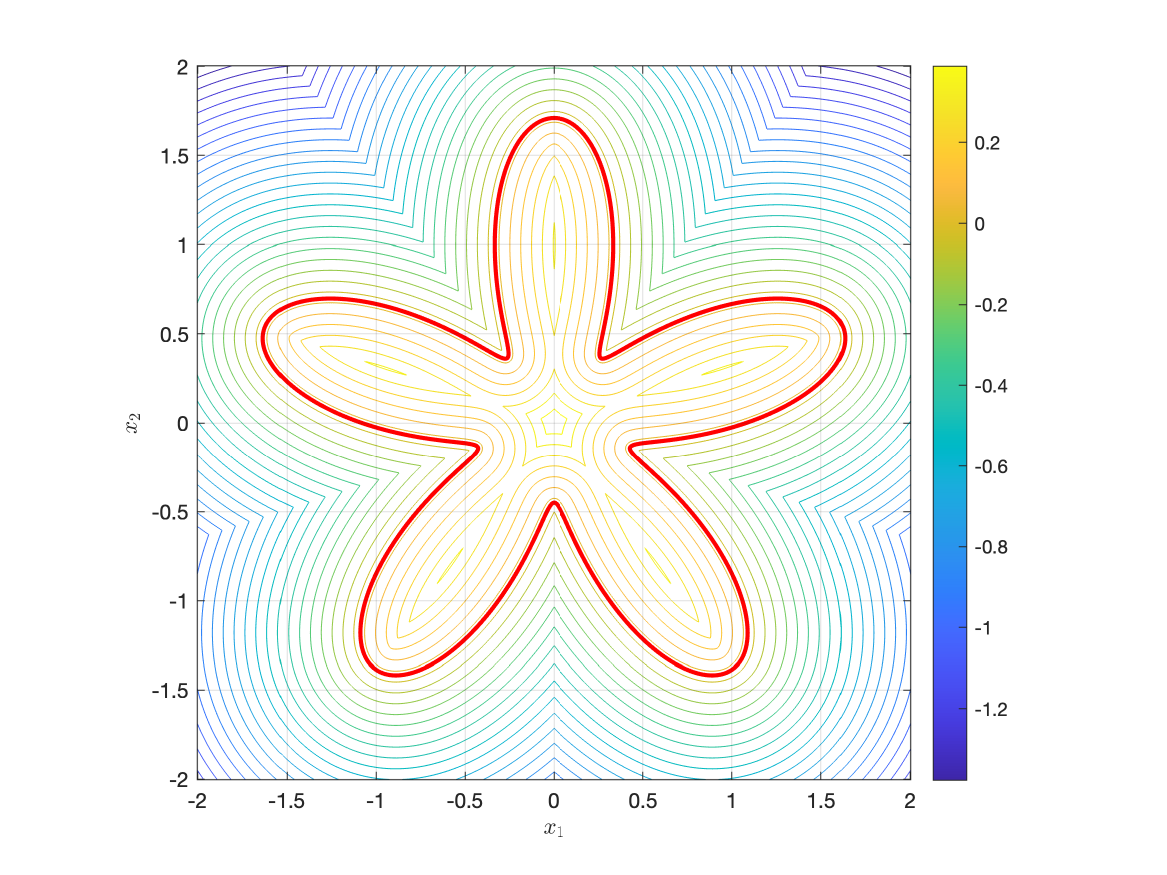}
    \caption{Plot of the boundary $\partial\Omega_1$ of the domain $\Omega_1$ defined by \qref{star-domain}, and the level curves of the signed distance function $r(x_1, x_2)$.}
    \label{fig:2DFunDomain}
\end{figure}

\begin{figure}[htb!]
\centering
\includegraphics[width=0.85\textwidth]{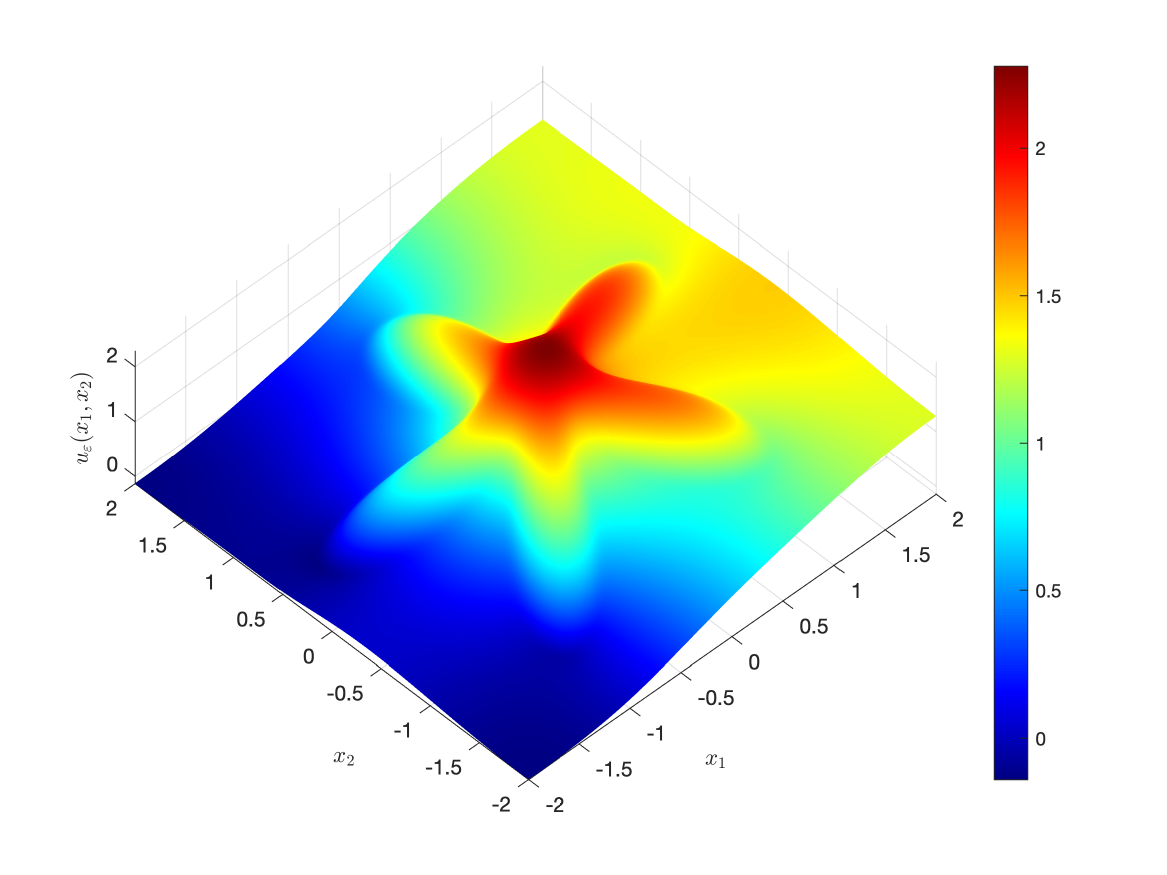}
    \caption{Plot of the solution $u_\varepsilon$ to the ``starfish'' problem, for $\varepsilon = 0.05$, with the parameters given in  \qref{starfish-1}--\qref{starfish-2}.}
    \label{fig:2DFunDomainPlot}
\end{figure}


A common concern for numerical PDE solvers is how boundary smoothness affects the rate of convergence. Corners or non-smooth features in the boundary can sometimes degrade the accuracy of traditional methods. To test whether the presence of corners impacts our diffuse domain method, we construct a test case using a convex pentagonal domain $\Omega_1$ embedded in $[-2,2]^2$. The vertices of the pentagon are chosen as
\[
(0.8,0),\quad (0.3,0.9),\quad (-0.5,0.7),\quad (-0.9,-0.2),\quad (0.2,-0.8).
\]
We define the computational domain $\Omega := [-2,2]\times[-2,2]$ and the outer domain $\Omega_2 := \Omega \setminus \overline{\Omega_1}$. The parameters used for the diffuse domain BVP \eqref{diff-bvp1}--\eqref{diff-bvp2} are:
\begin{align}
    \label{PentagonParameter-1}
\alpha &=1 , \quad \beta = 1, \quad \gamma = 2, \quad \kappa = 0.5, \quad \lambda = 4.5,
    \\
\label{PentagonParameter-2} q(x_1,x_2) & = x_1^2+5,
    \quad 
 h(x_1,x_2)  = -\sin{(x_1)}-e^{\cos{(x_2)}}.
    \end{align}

Since constructing an exact analytical solution for this domain is challenging, we define a high-accuracy numerical ground truth solution, denoted by $u_{\rm g}$. This $u_{\rm g}$ is obtained by solving the system with a very small interface width parameter, specifically $\varepsilon=0.004$. We then compute the $L^2$ and $L^{\infty}$ errors of the solutions $u_\varepsilon$ (obtained with larger $\varepsilon$ values) with respect to $u_{\rm g}$.

Figure~\ref{fig:PentagonDomainPlot} illustrates the pentagonal boundary ($\partial\Omega_1$) and the level curves of the corresponding signed distance function $r(x_1,x_2)$. The numerical ground truth solution $u_{\rm g}$ (i.e., $u_\varepsilon$ for $\varepsilon = 0.004$) is displayed in Figure~\ref{fig:PentagonDomainSolutionPlot}, showing the solution's behavior near the pentagon boundary. The convergence results, plotting the error against $\varepsilon$ on a log-log scale, are presented in Figure~\ref{fig:PentagonCauchyPlotLinf} ($L^\infty$ error) and Figure~\ref{fig:PentagonCauchyPlotL2} ($L^2$ error).
In contrast to some discretization methods where boundary singularities can significantly reduce the order of convergence, we do not observe such degradation here compared to results on smoother domains. This suggests that, for this problem and parameter regime, boundary smoothness plays a relatively minor role in the convergence behavior of the diffuse domain method with respect to $\varepsilon$. However, more testing and analysis needs to be done to determine the limitations of the method.

\begin{figure}[htb!]
    \centering
    \includegraphics[width=0.8\textwidth]{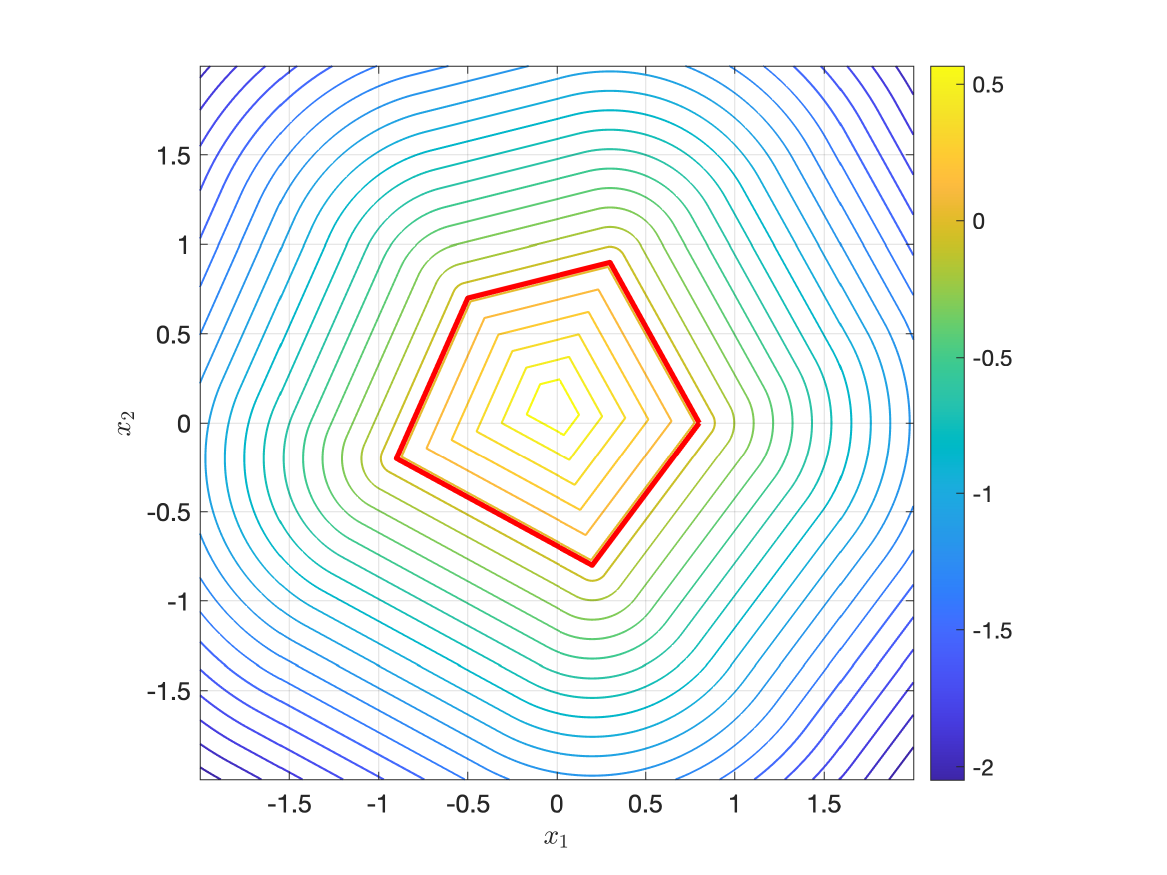}
    \caption{Plot of the pentagon domain $\Omega_1$ embedded in $[-2,2]^2$, defined by the five vertices
    $(0.8,0)$,
    $(0.3,0.9)$,
    $(-0.5,0.7)$,
    $(-0.9,-0.2)$,
    $(0.2,-0.8)$.
    The level curves show the signed distance function $r(x_1,x_2)$.} 
    \label{fig:PentagonDomainPlot}
\end{figure}

\begin{figure}[htb!]
    \centering
    \includegraphics[width=0.8\textwidth]{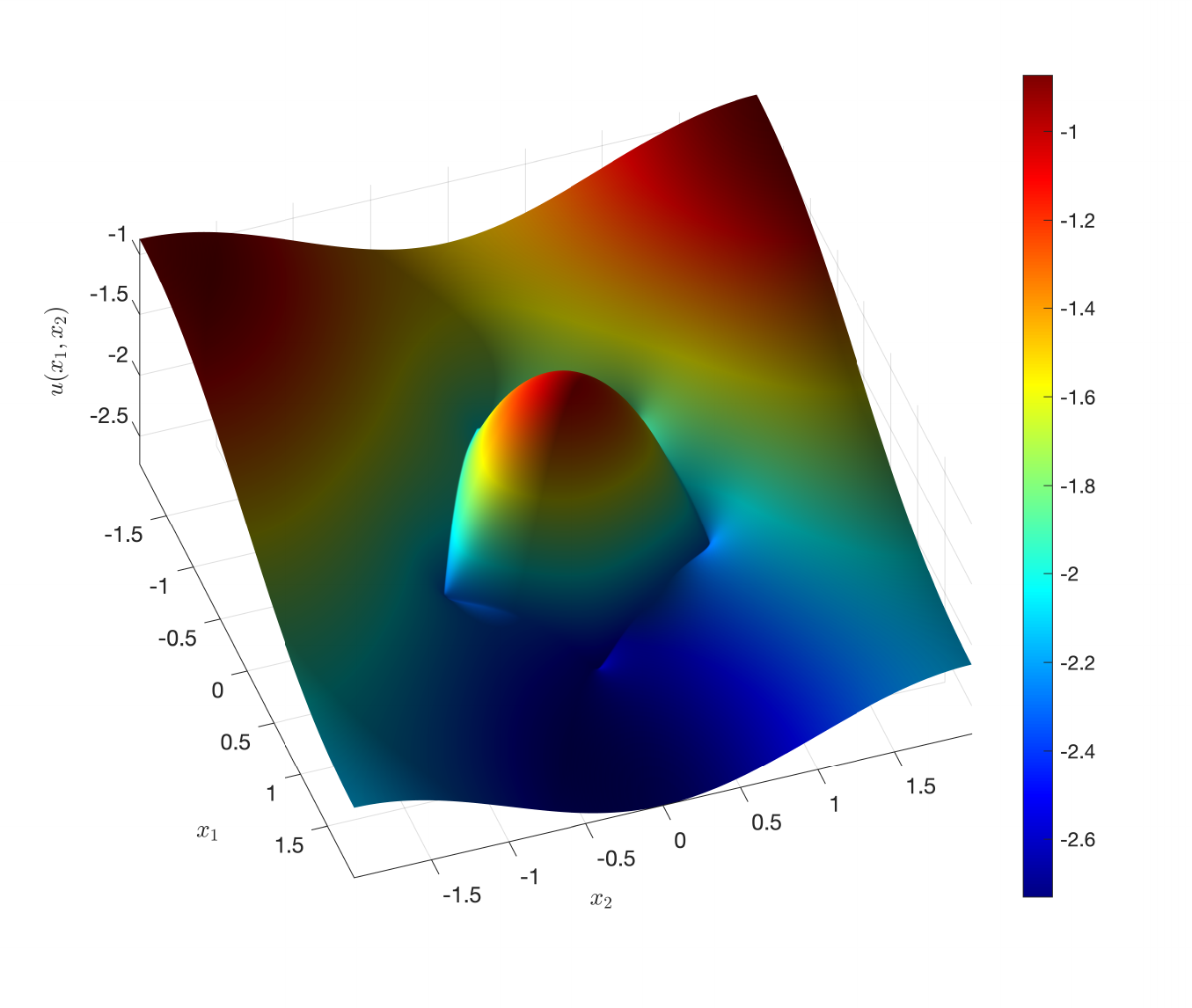}
    \caption{Plot of the diffuse domain approximation solution $u_\varepsilon$ for $\varepsilon = 0.004$ (defined as $u_{\rm g}$) in the pentagon domain.} 
    \label{fig:PentagonDomainSolutionPlot}
\end{figure}

\begin{figure}[htb!]
    \centering
    \includegraphics[width=0.75\textwidth]{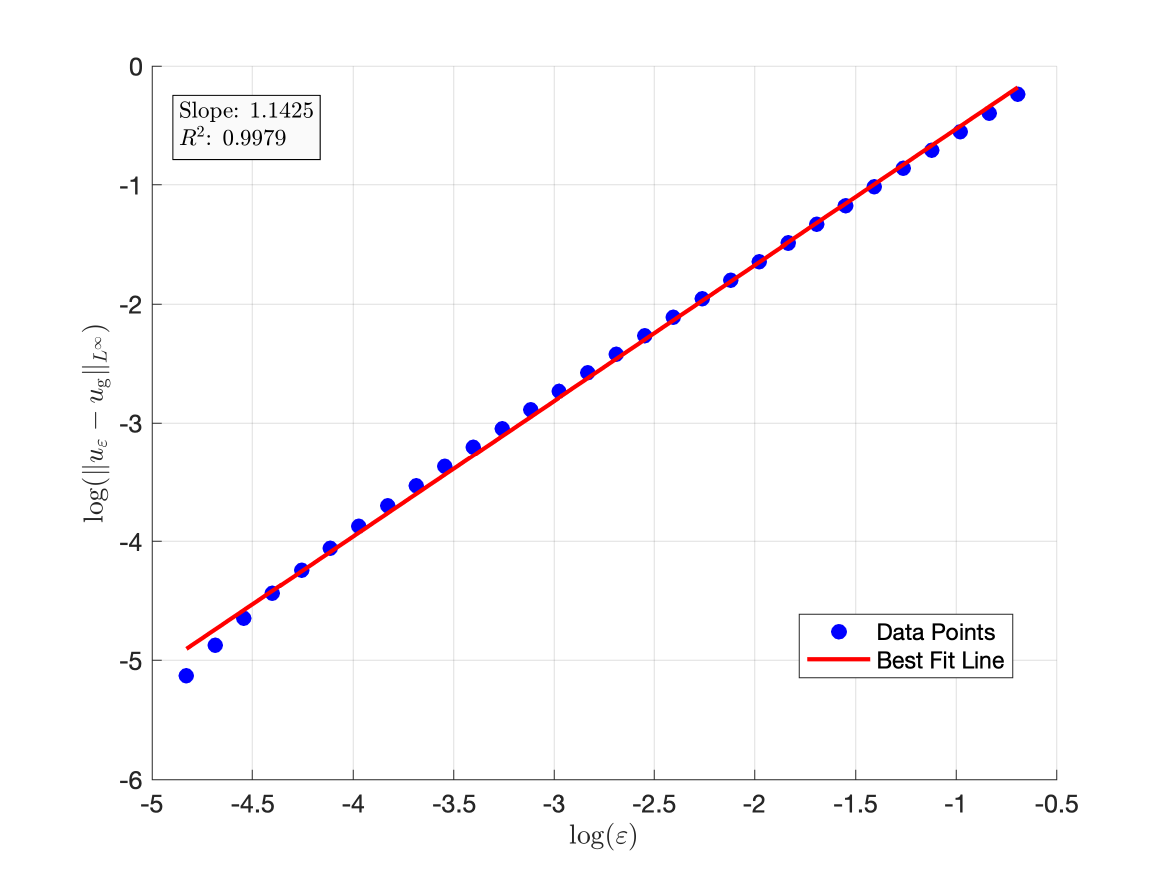}
    \caption{Log-log plot of the $L^\infty$ error $\|u_\varepsilon - u_{\rm g}\|_{L^\infty}$ versus $\varepsilon$ for the pentagon domain test. The observed order of convergence as $\varepsilon\rightarrow0$ is approximately $O\left(\varepsilon^{1.14}\right)$. No significant degradation in the convergence rate is observed compared to smoother domains.}
    \label{fig:PentagonCauchyPlotLinf} 
\end{figure}

\begin{figure}[htb!]
    \centering
    \includegraphics[width=0.75\textwidth]{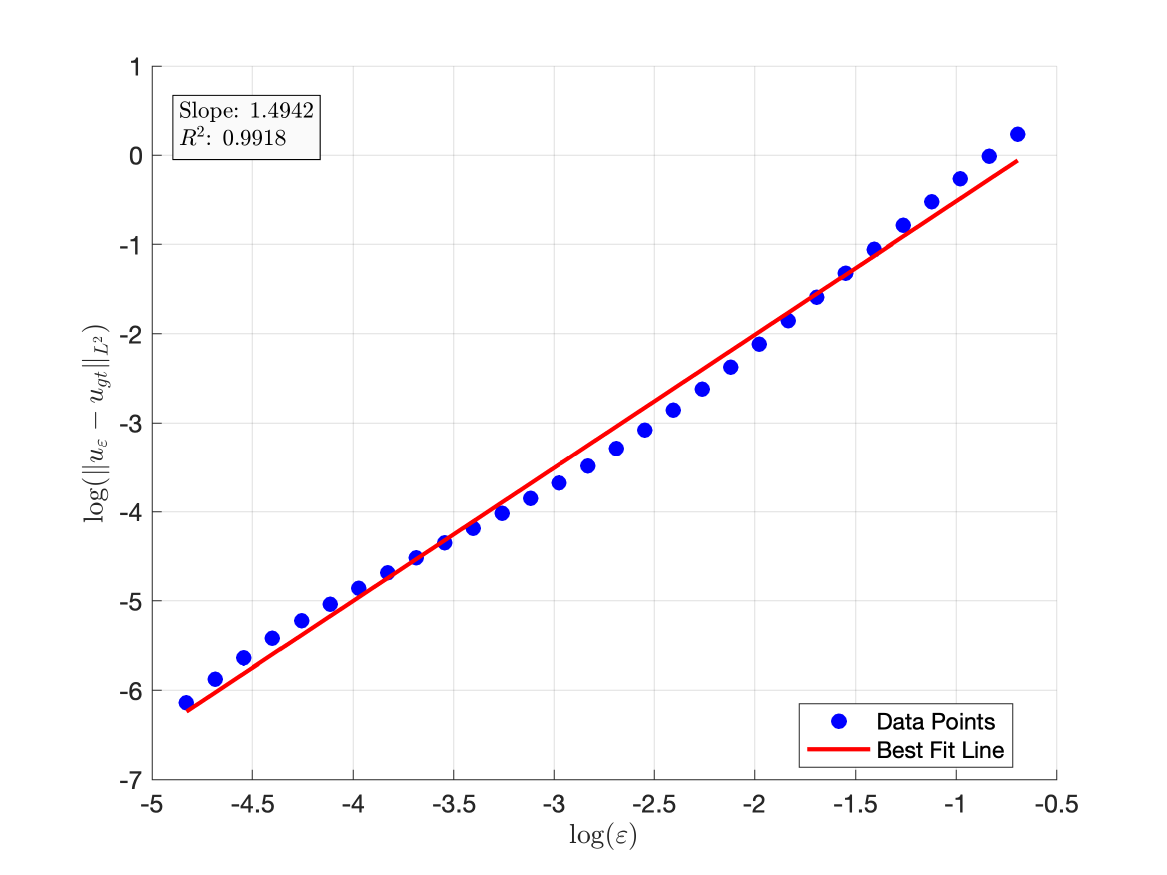}
    \caption{Log-log plot of the $L^2$ error $\|u_\varepsilon - u_{\rm g}\|_{L^2}$ versus $\varepsilon$ for the pentagon domain test. The observed order of convergence as $\varepsilon\rightarrow0$ is approximately $O\left(\varepsilon^{1.49}\right)$. No significant degradation in the convergence rate is observed compared to smoother domains.}
    \label{fig:PentagonCauchyPlotL2}
\end{figure}


    \section{Discussion}
    \label{sec:discuss}
In this work, we performed a matched asymptotic analysis of the diffuse domain approximation for the two-sided reaction-diffusion equation with general transmission boundary conditions. 
The two-sided problem is interesting and useful on its own merits. Indeed, many physical processes have different physical parameters on different sides of a dividing interface. 

But, the one-sided problem is, arguably, much more interesting, and it is the reason the diffuse domain method was developed. Clearly, to compute the solution to the one-sided problem, one must solve a well-posed problem on the whole of the cuboid $\Omega$, such that the portion of the approximation in the exterior domain, $\Omega_2$, has a negligible effect on the approximation in $\Omega_1$.  The two-sided problem we examined here was derived from the one-sided problem by introducing a modified cutoff phase-field function
    \begin{align*}
D_\varepsilon(x) = \alpha + (1 - \alpha)\phi_\varepsilon(x),
    \end{align*}
where the constant $\alpha$ is independent of $\varepsilon$ to avoid degeneracy in the exterior domain $\Omega_2$. Lerv{\aa}g and Lowengrub~\cite{Lowengrub-DDM2015} took $\alpha>0$ to be small, but still positive, and independent of $\varepsilon$. While this did not seem to spoil the second-order asymptotic convergence that they predicted, their analysis does not, strictly speaking, cover this case. Our present analysis does cover this case, as long as $\alpha > 0$ is finite and independent of $\varepsilon$. Moreover, our analysis shows that this choice degrades the asymptotic convergence to exactly first-order.

To verify this numerically, we perform the following experiments in the one-dimensional space. Specifically, we define the domains as 
    \begin{align*}
    \Omega:=(-1,1), \quad \Omega_1=\Omega_R:=(0,1), \quad \Omega_2=\Omega_L:=(-1,0), 
    \end{align*}
and choose the function
\begin{align}
    u_0(x) = \begin{cases}	 
    u_R(x) = (4x^2-8x+6)\cos{(4\pi x)} ,&  \text{ if }  x\in\Omega_R,
    \\
    u_L(x) = 6 ,&  \text{ if } x\in \Omega_L, 
    \label{1D-OneSidedTrueSolution}
    \end{cases}
\end{align}
which is a solution to the two-sided problem \qref{1D-bvp1}--\qref{1D-bvp5}, where
    \begin{align}
    \label{OneSidedParameter-1}
\alpha &>0 , \quad \beta = 0, \quad \gamma = 1, \quad \kappa = 1.6, \quad \lambda = -17.6, 
    \\
q(x) & = (16\pi^2 + 1) (4x^2 - 8x + 6)\cos(4 \pi x) + 64 \pi (x - 1) \sin(4 \pi x), 	 
    \\
\label{OneSidedParameter-2} h(x) & = 0.
    \end{align}
In this particular problem, the flux across the interface $x = 0$ exhibits a nonzero jump, satisfying
    \begin{align*}
u_R' (0) - \alpha u_L' (0) =  -8.
    \end{align*}
Picking $\alpha=0.01$,  taking the number of cell centered finite difference points to be $N=2^{15}$, and setting $\varepsilon=0.001$, our numerical results approximate solution matches $u_0(x)$ very well in the eyeball norm. See Figure~\ref{fig:OneSidedSolution}.

To quantify the error of our approximation,  we pick two small values of $\alpha$, $\alpha=0.01$ and $\alpha=0.001$, and we run the solver for several small $\varepsilon$ values. The log-log plots of the $L^2$ errors for $u_R$ against $\varepsilon$ are shown in Figures~\ref{fig:OneSided_a=0.01} and \ref{fig:OneSided_a=0.001}. Note that the sudden spikes in the errors for very small values of $\varepsilon$ are due to the finite accuracy of the numerical approximation via the finite difference method. Otherwise, we clearly observe that the convergence behavior transitions from order second-order, when $\varepsilon$ is relatively large compared to $\alpha$, to first-order, when $\varepsilon$ is relatively small compared to $\alpha$. Lerv{\aa}g and Lowengrub~\cite{Lowengrub-DDM2015} took $\alpha = 10^{-6}$, a much smaller value than the $\alpha$ values we have chosen. Consequently, they may not have observed this trend if the values of $\varepsilon$ they selected were not sufficiently small relative to $\alpha$. However, this trend will be observed as long as $\alpha > 0$ is finite and independent of $\varepsilon$. 

Now, some open questions arise: What if $\alpha$ depends on $\varepsilon$? More specifically, can the asymptotic convergence analysis be extended to the case that $\alpha = \varepsilon^m$, where $m$ is some positive number? Our preliminary computations suggest that, setting $\alpha=\varepsilon^m$, with $m\ge 2$, $\beta = 0$, and $h\equiv 0$ results in an approximation method for the one-sided problem that is numerically well-posed and well-conditioned, and one that converges to the solution of the one-sided problem at the rate of $O(\varepsilon^2)$. 

In Figures~\ref{fig:OneSided_a=e^2} and \ref{fig:OneSided_a=e^3}, we plot the errors for the problem described above, where the only difference is that we take $\alpha=\varepsilon^2$ and $\alpha=\varepsilon^3$, respectively. Full second-order convergences are observed in both cases. What happens to the solution in the exterior domain, $\Omega_2$, in this case? Our preliminary tests suggest that the solution converges to a non-trivial function, one that is straightforward  to characterize. In one space dimension, and using the setup described above,  the solution $u_2$ over $\Omega_2$ will simply be a constant function, similar to what is shown in Figure~\ref{fig:OneSidedSolution}. However, rigorously proving that this limit is unique and well-defined is much more challenging.

When we make such an alteration --- that is, when we take $\alpha=\varepsilon^m$, $m\ge 2$, $\beta = 0$, and $h\equiv 0$ --- while we ensure that $D_\varepsilon$ does not decay exponentially rapidly in the exterior domain $\Omega_2$, we introduce significant, new mathematical difficulties for the asymptotic analysis. 
We will report on this challenging and intriguing one-sided problem in a forthcoming paper.

    \begin{figure}[htb!]
    \centering
\includegraphics[width=0.7\linewidth]{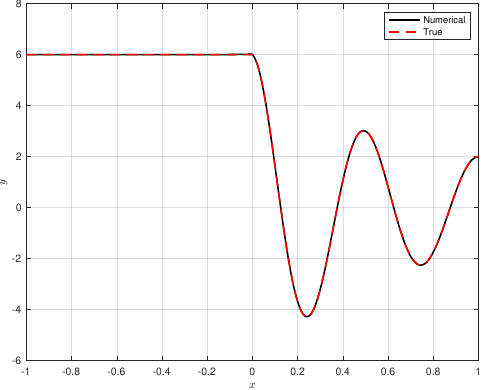}
\caption{Plots of the true solution $u_0$ specified by \qref{1D-OneSidedTrueSolution} and the diffuse domain approximation solution $u_\varepsilon$ to the 1D problem, for $\varepsilon=0.05$, with the parameters given in  \qref{OneSidedParameter-1}--\qref{OneSidedParameter-2}.}
    \label{fig:OneSidedSolution}
    \end{figure}

\begin{figure}[htb!]
    \centering
    \begin{subfigure}{0.49\textwidth}
        \centering
        \includegraphics[width=0.95\linewidth]{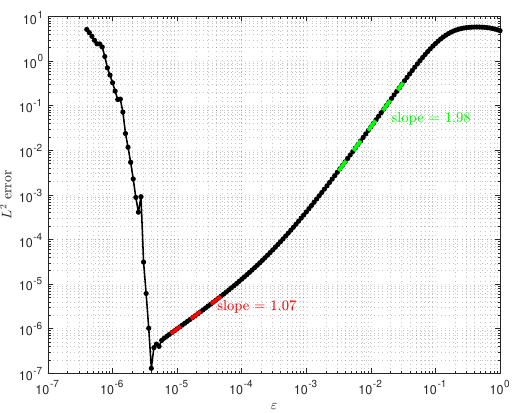}
        \caption{$\alpha=0.01$.}
        \label{fig:OneSided_a=0.01}
    \end{subfigure}
    \begin{subfigure}{0.49\textwidth}
        \centering
        \includegraphics[width=0.95\linewidth]{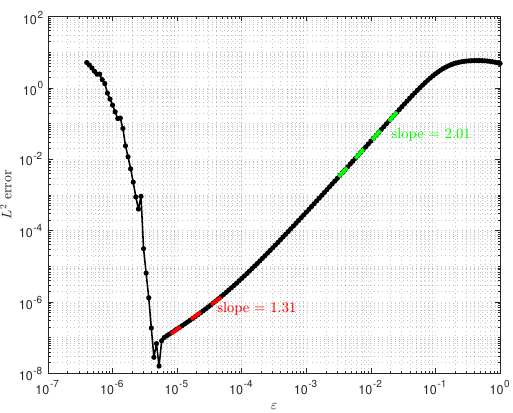}
        \caption{$\alpha=0.001$.}
        \label{fig:OneSided_a=0.001}
    \end{subfigure}
    \caption{The log-log plots of the $L^2$ Error for $u_R$ against $\varepsilon$ when $\alpha$ is independent of $\varepsilon$.}
    \label{fig:OneSided_a=const}
\end{figure}

\begin{figure}[htb!]
    \centering
    \begin{subfigure}{0.49\textwidth}
        \includegraphics[width=0.95\linewidth]{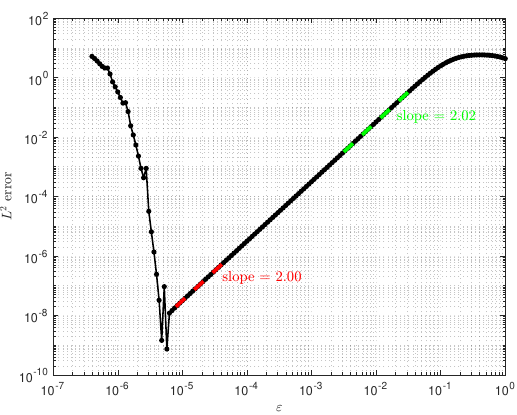}
        \caption{$\alpha=\varepsilon^2$.}
        \label{fig:OneSided_a=e^2}
    \end{subfigure}
    \begin{subfigure}{0.49\textwidth}
        \centering
        \includegraphics[width=0.95\linewidth]{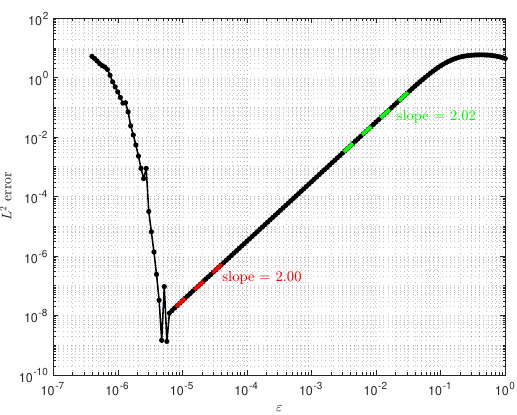}
        \caption{$\alpha=\varepsilon^3$.}
        \label{fig:OneSided_a=e^3}
        \end{subfigure}
    \caption{The log-log plots of the $L^2$ Error for $u_R$ against $\varepsilon$ when $\alpha$ depends on $\varepsilon$.}
    \label{fig:OneSided_a=e^m}
\end{figure}

    \section*{Acknowledgments}

TM gratefully acknowledges funding from the US National Science Foundation via grant number NSF-DMS 2206252. TL is thankful for partial support from the Department of Mathematics at the University of Tennessee. SMW and MHW acknowledge generous support from the US National Science Foundation through grants NSF-DMS 2012634 and NSF-DMS 2309547. The authors thank John Lowengrub for helpful discussions about this topic.

    \bibliographystyle{plain}
    \bibliography{Luong-bib}

    \end{document}